\title{Diffeomorphisms of the 4-sphere, Cerf theory and Montesinos twins}
\author{David T Gay}
\newtheorem{theorem}{Theorem}
\newtheorem{lemma}[theorem]{Lemma}
\newtheorem{proposition}[theorem]{Proposition}
\newtheorem{corollary}[theorem]{Corollary}
\theoremstyle{definition}
\newtheorem{definition}[theorem]{Definition}
\newtheorem{remark}[theorem]{Remark}
\newtheorem{question}[theorem]{Question}
\def\Z{\mathbb Z}
\def\N{\mathbb N}
\def\R{\mathbb R}
\newcommand{\into}{\ensuremath{\hookrightarrow}}
\newcommand{\id}{\mathop{\rm id}\nolimits}
\newcommand{\Diff}{\mathop{\rm Diff}\nolimits}
\newcommand{\Emb}{\mathop{\rm Emb}\nolimits}
\begin{document}

\begin{abstract}    

One way to better understand the smooth mapping class group of the $4$--sphere would be to give a list of generators in the form of explicit diffeomorphisms supported in neighborhoods of submanifolds, in analogy with Dehn twists on surfaces. As a step in this direction, we describe a surjective homomorphism from a group associated to loops of $2$--spheres in $S^2 \times S^2$'s onto this smooth mapping class group, discuss two natural and in some sense complementary subgroups of the domain of this homomomorphism, show that one is in the kernel, and give generators as above for the image of the other. These generators are described as twists along Montesinos twins, i.e. pairs of embedded $2$--spheres in $S^4$ intersecting transversely at two points.
\end{abstract}

\maketitle

\section{Introduction}

Given a smooth oriented manifold $X$, let $\Diff^+(X)$ be the space of orientation preserving diffeomorphisms of $X$ (fixed on a collar neighborhood of $\partial X$ if $\partial X \neq \emptyset$). Here, inspired heavily by Watanabe's work~\cite{Watanabe} on homotopy groups of $\Diff^+(B^4)$ and Budney and Gabai's work~\cite{BudneyGabai} on knotted $3$--balls in $S^4$, we initiate a study of $\pi_0(\Diff^+(S^4))$, i.e. the smooth mapping class group of the $4$--sphere. We know very little about this group except that it is abelian and that every orientation preserving diffeomorphism of $S^4$ is {\em pseudoisotopic} to the identity; the group could very well be trivial, like the topological mapping class group. Ideally we would like to find a generating set for this mapping class group defined explicitly and geometrically, for example as  explicit diffeomorphisms supported in neighborhoods of explicit submanifolds of $S^4$, in analogy with Dehn twists as generators of the mapping class groups of surfaces. In this paper we construct a surjective homomorphism from a limit of fundamental groups of certain embedding spaces of $2$--spheres in $4$--manifolds onto $\pi_0(\Diff^+(S^4))$, we describe one geometrically natural subgroup of the domain of this homomorphism which we show to be in its kernel, and we describe a ``complementary'' geometrically natural subgroup and give an explicit list of generators as above for its image in $\pi_0(\Diff^+(S^4))$.

All manifolds and maps between manifolds are smooth in this paper. The symbol $\delta_{ij}$ refers throughout to the Kronecker delta symbol.

Given manifolds $X$ and $Y$, let $\Emb(X,Y)$ denote the space of embeddings of $X$ into $Y$. For any manifold $Y$ let $Y^\dagger$ refer to the punctured manifold $Y \setminus \{p\}$ for some $p \in Y$. (The puncture is not important until the next paragraph.) Given a manifold $X$ of dimension $m$ and natural numbers $n$ and $k$, let
\[ \mathcal{S}_{k,n}(X) = \Emb(\amalg^n S^k, X \#^n (S^k \times S^{m-k})^\dagger) \] 
Note that this notation is a little ambiguous as to how many punctures are involved; we mean that the puncture happens after the connected sums, so that there is only one puncture, not $n$ punctures.

This $\mathcal{S}$ stands for ``spheres'', as in ``space of embeddings of collections of spheres'', the index $n$ is the number of spheres, and the index $k$ is the dimension of the spheres. For the main results of this paper we are interested in $X=S^4$ and $k=2$, giving us
\[ \mathcal{S}_{2,n}(S^4) = \Emb(\amalg^n S^2, S^4 \#^n (S^2 \times S^2)^\dagger) = Emb(\amalg^n S^2, \#^n (S^2 \times S^2)^\dagger) \] 
Fix a point $p \in S^2$ and let $\amalg^n (S^2 \times \{p\}) \subset \#^n (S^2 \times S^2)^\dagger$ denote the union of one copy of $S^2 \times \{p\}$ in each $S^2 \times S^2$ summand of $\#^n (S^2 \times S^2)^\dagger$. This will be our basepoint in $\mathcal{S}_{2,n}(S^4)$, which we will often suppress from our notation, with the understanding that $\mathcal{S}_{2,n}(S^4)$ is a pointed space. We will also be interested in two subspaces of $\mathcal{S}_{2,n}(S^4)$: Let $\mathcal{S}^0_{2,n}(S^4)$ denote the subspace of embeddings with the property that for each $i$ and $j$ the $i$'th component of $\amalg^n S^2$ intersects the $\{p\} \times S^2$ in the $j$'th summand of $\#^n (S^2 \times S^2)^\dagger$ transversely at $\delta_{ij}$ points. Let $\widehat{\mathcal{S}}_{2,n}(S^4)$ denote the subspace of embeddings with the property that the image of $\amalg^n S^2$ is disjoint from $\amalg^n (S^2 \times \{p'\})$ for some fixed $p' \neq p \in S^2$. Note that our basepoint lies in both of these subspaces.

Thanks to the puncture, we have a natural basepoint-preserving inclusion $\jmath: \mathcal{S}_{2,n}(S^4) \into \mathcal{S}_{2,n+1}(S^4)$, respecting the inclusions of $\mathcal{S}^0_{2,n}(S^4)$ and $\widehat{\mathcal{S}}_{2,n}(S^4)$, and thus inclusion-induced homomorphisms $\jmath_* : \pi_1(\mathcal{S}_{2,n}(S^4)) \to \pi_1(\mathcal{S}_{2,n+1}(S^4))$ which commute with 
the inclusion-induced homomorphisms $\imath_*: \pi_1(\mathcal{S}^0_{2,n}(S^4)) \to \pi_1(\mathcal{S}_{2,n}(S^4))$ and $\imath_*: \pi_1(\widehat{\mathcal{S}}_{2,n}(S^4)) \to \pi_1(\mathcal{S}_{2,n}(S^4))$. As a consequence, we have limit groups which we will denote $\pi_1(\mathcal{S}_{2,\infty}(S^4))$, $\pi_1(\mathcal{S}^0_{2,\infty}(S^4))$, $\pi_1(\widehat{\mathcal{S}}_{2,\infty}(S^4))$ (it is not important for us to think about the limiting spaces, just the groups, but this notation is convenient) and limiting inclusion-induced homomorphisms $\imath_*$ between them.
Our first result is:
\begin{theorem} \label{T:H2infSurj}
 There exists a sequence of homomorphisms $\mathcal{H}_{2,n}: \pi_1(\mathcal{S}_{2,n}(S^4)) \to \pi_0(\Diff^+(S^4))$, for $n \in \N$, satisfying the following properties:
 \begin{enumerate}
  \item The $\mathcal{H}_{2,n}$'s commute with the $\jmath_*$'s, fitting into the commutative diagram shown in Figure~\ref{F:SnHnCommutative}. Thus this gives rise to a limit homomorphism $\mathcal{H}_{2,\infty} : \pi_1(\mathcal{S}_{2,\infty}(S^4)) \to \pi_0(\Diff^+(S^4))$ and the following diagram:
  {\setlength\mathsurround{0pt}
  \[
  \begin{tikzcd}[row sep=small, column sep=small, cramped]
  \pi_1(\mathcal{S}^0_{2,\infty}(S^4)) \arrow[dr, "\imath_*" near start] &&&& \\
  & \pi_1(\mathcal{S}_{2,\infty}(S^4)) \arrow[rrr, "\mathcal{H}_{2,\infty}"] &&& \pi_0(\Diff^+(S^4))\\
  \pi_1(\widehat{\mathcal{S}}_{2,\infty}(S^4)) \arrow [ur, "\imath_*"'] &&&& 
  \end{tikzcd}
  \]
  }
  \item The limit homomomorphism $\mathcal{H}_{2,\infty}$ is surjective.
  \item The image of $\pi_1(\mathcal{S}^0_{2,\infty}(S^4))$ under $\imath_*$ is contained in the kernel of $\mathcal{H}_{2,\infty}$.
 \end{enumerate}
\end{theorem}

\begin{figure}
{\setlength\mathsurround{0pt}
\[
\begin{tikzcd}[row sep=small, column sep=small, cramped]
\pi_1(\mathcal{S}^0_{2,n-1}(S^4)) \arrow[ddd, "\jmath_*"] \arrow[drr, "\imath_*" near start] &&& \\
&& \pi_1(\mathcal{S}_{2,n-1}(S^4)) \arrow[ddd, "\jmath_*"] \arrow[dddr, "\mathcal{H}_{2,n-1}"] & \\
& \pi_1(\widehat{\mathcal{S}}_{2,n-1}(S^4)) \arrow [ur, "\imath_*"] && \\
\pi_1(\mathcal{S}^0_{2,n}(S^4)) \arrow[ddd, "\jmath_*"] \arrow[drr, "\imath_*" near start] &&& \\
&& \pi_1(\mathcal{S}_{2,n}(S^4)) \arrow[ddd, "\jmath_*"] \arrow[r, "\mathcal{H}_{2,n}"] & \pi_0(\Diff^+(S^4))\\
& \pi_1(\widehat{\mathcal{S}}_{2,n}(S^4)) \arrow[from=uuu, crossing over, "\jmath_*" near start] \arrow [ur, "\imath_*"] && \\
\pi_1(\mathcal{S}^0_{2,n+1}(S^4)) \arrow[drr, "\imath_*" near start] &&& \\
&& \pi_1(\mathcal{S}_{2,n+1}(S^4)) \arrow[uuur, "\mathcal{H}_{2,n+1}"'] & \\
& \pi_1(\widehat{\mathcal{S}}_{2,n+1}(S^4)) \arrow[from=uuu, crossing over, "\jmath_*" near start] \arrow [ur, "\imath_*"] &&
\end{tikzcd}
\]
}
\caption{Commutative diagram relating maps in Theorem~\ref{T:H2infSurj}. \label{F:SnHnCommutative}}
\end{figure}

Note that the diagrams of our theorem include $\pi_1(\widehat{\mathcal{S}}_{2,n}(S^4))$ and $\pi_1(\widehat{\mathcal{S}}_{2,\infty}(S^4))$ even though the actual results in the theorem do not reference these groups. This is to set the context for Theorem~\ref{T:TwinsGenerate}, which gives explicit generators for the image of $\pi_1(\widehat{\mathcal{S}}_{2,\infty}(S^4))$ in $\pi_0(\Diff^+(S^4))$.

Krannich and Kupers~\cite{KrannichKupers} have given an alternative, shorter proof of a generalization of parts of Theorem~\ref{T:H2infSurj}, which explicitly connects this result to work of Kreck~\cite{Kreck} and Quinn~\cite{Quinn}.

As pointed out by the referee, Theorem~10.1 of~\cite{BudneyGabai} includes the statement that $\pi_1(\Emb(B^2,S^2 \times B^2))$ is free abelian of rank two, with explicit generators given, and one might wonder what happens to these generators under the homomorphism $\mathcal{H}_{2,1}$ when the $B^2$ is capped off to an $S^2$. However, the boundary condition for $\Emb(B^2,S^2 \times B^2)$ here is $\{p\} \times S^1$ for some fixed $p \in S^2$. If such an embedding of $B^2$ into $S^2 \times B^2$ is capped off to an embedding of $S^2$ into $S^2 \times S^2$ with any embedding of $B^2$ into the $S^2 \times B^2$ with the same boundary condition, the resulting embedding remains geometrically dual to $S^2 \times \{p\}$ for any $p \in S^1 = \partial B^2$. Thus all of these potentially interesting loops of $S^2$'s in $S^2 \times S^2$ land in $\pi_1(\mathcal{S}^0_{2,\infty}(S^4))$ and hence in the kernel of $\mathcal{H}_{2,\infty}$.

The homomorphisms $\mathcal{H}_{2,n}$ will be defined precisely in Section~\ref{S:LoopsToDiffs}, but they can be briefly described as follows: Note that surgery along our basepoint embedding of $\amalg^n S^2$ in $\#^n (S^2 \times S^2)$ turns $\#^n (S^2 \times S^2)$ into $S^4$. Thus a loop of embeddings starting and ending at this basepoint can be interpreted as an $S^1$--parameterized family of surgeries all of which turn $\#^n (S^2 \times S^2)$ into a $4$--manifold diffeomorphic to $S^4$. These fit together to give an $S^4$--bundle over $S^1$. The monodromy of this bundle is the output of $\mathcal{H}_{2,n}$. If one is worried about the fact that the monodromy of a bundle is only well defined up to conjugation, either note that $\pi_0(\Diff^+(S^4))$ is abelian or note that we can establish a canonical (up to isotopy) diffeomorphism between $S^4$ and the result of surgering $\#^n (S^2 \times S^2)$ along the basepoint embedding.

The proof of Theorem~\ref{T:H2infSurj} uses the fact that every orientation preserving diffeomorphism of $S^4$ is pseudoisotopic to the identity, Cerf's technique~\cite{Cerf} to turn a pseudoisotopy into a family of Morse functions, and results of Hatcher and Wagoner~\cite{HatcherWagoner} to optimally clean up such a family and its associated family of handle decompositions.

Our second result characterizes the image of $\pi_1(\widehat{\mathcal{S}}_{2,\infty}(S^4))$ in $\pi_0(\Diff^+(S^4))$ in terms of a countable list of explicit generators which we will now describe. This could in principle be all of $\pi_0(\Diff^+(S^4))$, although we have no evidence for or against that possibility.

A {\em Montesinos twin} in $S^4$ is a pair $W=(R,S)$ of embedded $2$--spheres $R,S \subset S^4$ intersecting transversely at two points. For us, the $2$--spheres are both oriented. Montesinos~\cite{MontesinosI} shows, and we explain in Section~\ref{S:Montesinos}, that the boundary of a regular neighborhood $\nu(W)$ of $R \cup S$ is diffeomorphic to $S^1 \times S^1 \times S^1$ and that in fact there is a canonical parametrization $S^1_l \times S^1_R \times S^1_S \cong \partial \nu(W)$. This parametrization is characterized by $S^1_l \times \{b\} \times \{c\}$ being homologically trivial in $H_1(S^4 \setminus (R \cup S))$, i.e. a ``longitude'', $\{a\} \times S^1_R \times \{c\}$ being a meridian for $R$, and $\{a\} \times \{b\} \times S^1_S$ being a meridian for $S$. This parametrization is canonical up to postcomposing with diffeomorphisms of $\partial \nu(W)$ which are isotopic to the identity and precomposing with independent diffeomorphisms of $S^1_l$, $S^1_R$ and $S^1_S$. This then parametrizes a regular neighborhood of $\partial \nu(W)$ as $[-1,1] \times S^1_l \times S^1_R \times S^1_S$. We adopt the orientation conventions that $S^1_R$ and $S^1_S$ have the standard meridian orientations coming from the orientations of $R$ and $S$, that $[-1,1]$ is oriented in the outward direction from $\nu(W)$, and that $S^1_l$ is oriented so that the orientation of $[-1,1] \times S^1_l \times S^1_R \times S^1_S$ agrees with the standard orientation of $S^4$.

\begin{definition}
 Given a Montesinos twin $W$ in $S^4$, parametrize a neighborhood of $\partial \nu(W)$ as $[-1,1] \times S^1_l \times S^1_R \times S^1_S$ as above. Let $\tau_l: [-1,1] \times S^1_l \to [-1,1] \times S^1_l$ denote a right-handed Dehn twist. The {\em twin twist along $W$}, denoted $\tau_W$, is the diffeomorphism of $S^4$ which is the identity outside this neighborhood of $\partial \nu(W)$ and is equal to $\tau_l \times \id_{S^1_R} \times \id_{S^1_S}$ inside this neighborhood.
\end{definition}
By the canonicity of our parametrization, $\tau_W$ is well-defined up to isotopy, i.e. $[\tau_W]$ is a well-defined class in $\pi_0(\Diff^+(S^4))$. Incidentally, we have the following as a consequence of our orientation conventions:
\begin{lemma} \label{L:TwinFlip}
 If $W = (R,S)$ is a Montesinos twin, then $[\tau_W]^{-1} = [\tau_{(S,R)}] = [\tau_{(\overline{R},S)}] = [\tau_{(R,\overline{S})}]$.
\end{lemma}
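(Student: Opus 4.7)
The plan is to track how the canonical parametrization $\phi: [-1,1] \times S^1_l \times S^1_R \times S^1_S \to $ (neighborhood of $\partial \nu(W)$) transforms under each of the three operations, and then to observe that a right-handed Dehn twist on $[-1,1] \times S^1_l$ becomes a left-handed one (hence the inverse in $\pi_0$) when the $S^1_l$ factor is reoriented. Throughout, the underlying submanifold $R \cup S$ and its tubular neighborhood $\nu(W)$ are unchanged, so we are really just comparing different orientation conventions on the same product structure.

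I would begin by noting the well-definedness of $\tau_W$: the meridian circles $S^1_R, S^1_S$ are pinned down as oriented circles by the orientations of $R$ and $S$ (using the outward normal and $S^4$ orientations); the interval $[-1,1]$ is outward-pointing; and the homological triviality condition together with the total product-orientation condition pins down the orientation of $S^1_l$ uniquely. The remaining freedom in $\phi$ is precomposition by orientation-preserving diffeomorphisms of each circle and postcomposition by ambient isotopies, which cannot change the isotopy class of the right-handed Dehn twist $\tau_l$.

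Next, for each of the three twin operations, I would write down the induced effect on $\phi$. For $(R,S) \mapsto (S,R)$: the new meridian for $R'$ (relabeled $S$) is the old $S^1_S$, and the new meridian for $S'$ (relabeled $R$) is the old $S^1_R$; thus the new product must carry the same ambient orientation as $[-1,1] \times S^1_{l'} \times S^1_S \times S^1_R$, and permuting the last two circle factors reverses orientation, forcing $S^1_{l'} = - S^1_l$. For $(R,S) \mapsto (\overline R, S)$: the meridian $S^1_{\overline R}$ is the orientation-reversal of $S^1_R$; demanding the product orientation still agree with $S^4$ again forces $S^1_{l'} = - S^1_l$. The case $(R, S) \mapsto (R, \overline S)$ is identical.

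Finally, I would invoke the standard fact that a right-handed Dehn twist on an oriented annulus $[-1,1] \times S^1$ is isotopic to a left-handed Dehn twist on the same annulus with the $S^1$-orientation reversed, so in all three cases the new twin twist is $\tau_l^{-1} \times \id \times \id$, which represents $[\tau_W]^{-1}$. The main potential obstacle is bookkeeping: ensuring that the ``swap'' and ``orientation-reversal'' operations really do preserve the underlying circles (as unoriented submanifolds of $\partial \nu(W)$) so that the comparison of Dehn twists on the same annulus is legitimate, but this follows from the homological characterization of the longitude and meridians, which depends only on the unoriented pair $R \cup S$.
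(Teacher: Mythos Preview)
Your proposal is correct and follows essentially the same approach as the paper's proof: each of the three operations reverses the orientation of $S^1_R \times S^1_S$, which (since $[-1,1]$ remains outward-pointing) forces the reversal of the $S^1_l$ orientation, turning the right-handed Dehn twist on $[-1,1]\times S^1_l$ into a left-handed one. The paper compresses this into two sentences, while you spell out the case analysis and the well-definedness check more explicitly, but the argument is the same.
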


\begin{proof}
 Either switching the spheres $S$ and $R$, or reversing the orientation of one of them, reverses the orientation of $S^1_R \times S^1_S$, which then forces the reversal of the orientation of $[-1,1] \times S^1_l$, changing a positive Dehn twist to a negative Dehn twist.
\end{proof}

We now describe a family of twins $W(i) = (R(i),S)$, for $i \in \mathbb{N} \cup \{0\}$. Figure~\ref{F:snake} illustrates $W(3)$; $W(i)$ is the same but with $i$ turns in the spiral rather than $3$ turns. Figure~\ref{F:Ribbon} and~\ref{F:FingerMove} give two alternate descriptions of this twin. Orientations are not made explicit here since our main claim is simply that the twists invoved generate a certain group, and the inverse of a generator is still a generator. Note that both $R(i)$ and $S$ are individually unknotted $2$--spheres, and that the twin $W(0)$ is the trivial ``unknotted twin''.
\begin{figure}
  \labellist
  \small\hair 2pt
  \pinlabel $R(3)$ [r] at 2 125
  \pinlabel $S$ [l] at 260 106
  \endlabellist
  \centering
  \includegraphics[width=8cm]{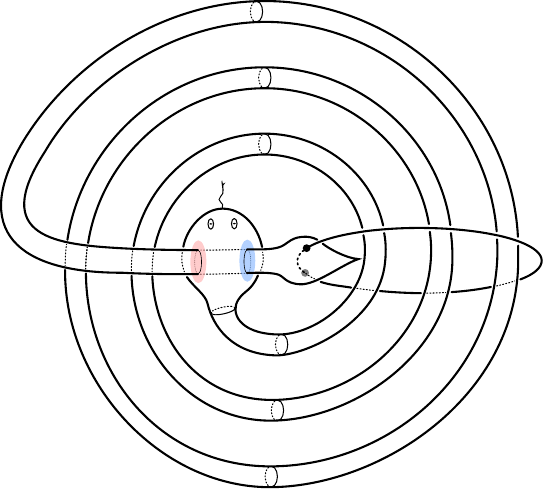}
  \caption{An illustration of $W(3) = (R(3),S)$. The picture mostly happens in the slice $\{t=0\} \subset \R^4 = \{(x,y,z,t)\} \subset \R^4 \cup \{\infty\}=S^4$. The ring labelled $S$ is a slice through $S$, which shrinks to a point as we move forwards and backwards in the ``time'' coordinate $t$. The ``snake whose tail passes through his head'' is $R(3)$, which is projected onto $\{t=0\}$, intersecting itself along one circle in the middle of the red disk (the ``snake's left ear hole'') and along another circle in the middle of the blue disk (the ``right ear hole''). Blue and red indicate that these disks are pushed slightly forwards (blue) and backwards (red) in time to resolve these intersections; otherwise $R(3)$ lies in the slice $\{t=0\}$. The two points of intersection between $R(3)$ and $S$ are where the ring pierces the tail.}
  \label{F:snake}
\end{figure}
\begin{figure}
 \centering
 \includegraphics[width=8cm]{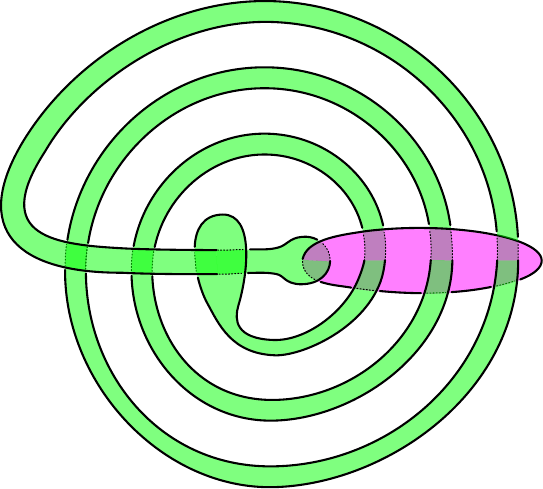}
 \caption{Another illustration of $W(3)=(R(3),S)$. Here we have drawn an immersed pair of disks, one green and one pink, with mostly ribbon intersections except for one nonribbon arc. Pushing these two disks into $\R^4_+$ or $\R^4_-$ and resolving ribbon intersections in the usual way gives two embedded disks intersecting each other transversely once, and then taking one copy in $\R^4_+$ and one in $\R^4_-$ glued along their common boundary, i.e. doubling the ribbon disks, gives $R(3)$ (green) and $S$ (pink) in $\R^4 \subset S^4$.}
 \label{F:Ribbon}
\end{figure}
\begin{figure}
 \centering
 \includegraphics{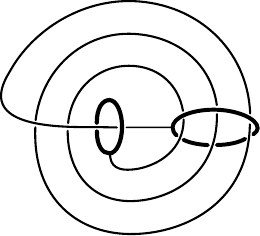}
 \caption{Yet another illustration of $W(3)$. Here we have drawn two disjoint, embedded $2$--spheres in $S^4$ (the two thick circles, becoming $2$--spheres when shrunk to points forwards and backwards in time) and an arc connecting them. Push a finger from one of the spheres out along this arc and then do a finger move when you encounter the other sphere, creating a pair of transverse intersections, and the result is $W(3)$.}
 \label{F:FingerMove}
\end{figure}

Our second result is:
\begin{theorem} \label{T:TwinsGenerate}
 The subgroup $\mathcal{H}_{2,\infty}(\imath_*(\pi_1(\widehat{\mathcal{S}}_{2,\infty}(S^4))))$ of $\pi_0(\Diff^+(S^4))$ is generated by the twin twists $\{\tau_{W(i)} \mid i \in \mathbb{N}\}$. In other words, every diffeomorphism of $S^4$ coming from a loop of embeddings of $\amalg^n S^2$ into $\#^n (S^2 \times S^2)$ which starts and ends at $\amalg^n (S^2 \times \{p\})$ and remains disjoint from a parallel embedding $\amalg^n (S^2 \times \{p'\})$ is isotopic to a composition of twin twists along the Montesinos twins $\{W(i)\}$.
\end{theorem}
In fact these automorphisms $\tau_{W(i)}$ are also examples of the ``barbell maps'' discussed in~\cite{BudneyGabai}; readers familiar with barbell maps should be able to use the description of $W(i)$ in Figure~\ref{F:FingerMove} to see the connection.

To avoid cumbersome notation, we will mostly refer to $\mathcal{H}_{2,\infty}(\imath_*(\pi_1(\widehat{\mathcal{S}}_{2,\infty}(S^4))))$ as ``the image of $\pi_1(\widehat{\mathcal{S}}_{2,\infty}(S^4))$''.
\begin{question}
 Is $[\tau_W]$ in the image of $\pi_1(\widehat{\mathcal{S}}_{2,\infty}(S^4))$ for an arbitrary Montesinos twin $W$? More generally, given any embedding of $S^1 \times \Sigma \into S^4$, for closed surface $\Sigma$, a tubular neighborhood gives an embedding of $[-1,1] \times S^1 \times \Sigma \into S^4$ which gives a diffeomorphism $\tau \times \id_\Sigma$, where $\tau$ is the Dehn twist on $[-1,1] \times S^1$. Are such diffeomorphisms always in the image of $\pi_1(\widehat{\mathcal{S}}_{2,\infty}(S^4))$?
\end{question}
One could try to answer these questions either through Cerf theory, by explicitly identifying a pseudoisotopy from a given diffeomorphism of $S^4$ to the identity, and then extracting a loop of attaching spheres for $5$--dimensional $2$--handles, or one could try to work explicitly with the diffeomorphisms in $S^4$ and try to find relationships amongst such twists, to relate them to twists along our standard Montesinos twins $W(i)$.

The bigger questions are the following, with affirmative answers to both showing that the smooth mapping class group of $S^4$ is trivial:
\begin{question}
 Is the image of $\pi_1(\widehat{\mathcal{S}}_{2,\infty}(S^4))$ trivial?
\end{question}
Theorem~\ref{T:TwinsGenerate} could help prove this if one can exhibit explicit isotopies from $\tau_{W(i)}$ to $\id_{S^4}$.
\begin{question}
 Is the image of $\pi_1(\widehat{\mathcal{S}}_{2,\infty}(S^4))$ equal to all of $\pi_0(\Diff^+(S^4))$?
\end{question}
Since $\imath_*(\pi_1(\mathcal{S}^0_{2,\infty}(S^4)))$ is in the kernel of $\mathcal{H}_{2,\infty}$, we know that $\mathcal{H}_{2,\infty}$ factors through the quotient map $\pi: \pi_1(\mathcal{S}_{2,\infty}(S^4)) \to \pi_1(\mathcal{S}_{2,\infty}(S^4))/\langle\imath_*(\pi_1(\mathcal{S}^0_{2,\infty}(S^4)))\rangle$, where $\langle H \rangle$ denotes the normal closure of a subgroup $H$. Thus one way to answer the above question in the affirmative is to show that 
\[\pi \circ \imath_* : \pi_1(\widehat{\mathcal{S}}_{2,\infty}(S^4)) \to \pi_1(\mathcal{S}_{2,\infty}(S^4))/\langle\imath_*(\pi_1(\mathcal{S}^0_{2,\infty}(S^4)))\rangle\] 
is surjective. On the other hand this does not need to be true for the answer to this question to be ``yes'', since the kernel of $\mathcal{H}_{2,\infty}$ could presumably be much larger than $\imath_*(\pi_1(\mathcal{S}^0_{2,\infty}(S^4)))$.

In the next section we elaborate on the connection between loops of embeddings of certain spheres and self-diffeomorphisms of other spheres, setting up the general theory in various dimensions and codimensions and defining the homomorphisms $\mathcal{H}_{2,n}$. After that, we devote one section to completing the proof of Theorem~\ref{T:H2infSurj}, and we break the proof of Theorem~\ref{T:TwinsGenerate} into the three remaining sections.

The author would like to thank Bruce Bartlett, Sarah Blackwell, Mike Freedman, Robert Gompf, Jason Joseph, Danica Kosanovic, Peter Lambert-Cole, Gordana Matic, Benjamin Ruppik, Rob Schneiderman, Peter Teichner and Jeremy Van Horn-Morris for helpful conversations along the way, and most especially Hannah Schwartz for pointing out Montesinos's work and Dave Gabai for initial inspiration and for pointing out the mistake in the first version that claimed far too much, and both Hannah and Dave for many clarifying conversations about loops of spheres.

Much of this work was carried out during the author's year at the Max Planck Institute for Mathematics in Bonn, and thus the author gratefully acknowledges the institute's support in the form of the 2019-2020 Hirzebruch Research Chair. This work was also supported by individual NSF grant DMS-2005554 and NSF Focused Research Group grant DMS-1664567.

Lastly, the author would like to thank the referee for a very careful reading and very helpful comments which led to greatly improved exposition.

\section{From loops of spheres to diffeomorphisms} \label{S:LoopsToDiffs}
In this section we define the homomorphisms $\mathcal{H}_{2,n}: \pi_1(\mathcal{S}_{2,n}(S^4)) \to \pi_0(\Diff^+(S^4))$ and establish some lemmas that prove all the results of Theorem~\ref{T:H2infSurj} except the surjectivity of $\mathcal{H}_{2,\infty}$. This surjectivity will be proved in the next section.

We define $\mathcal{H}_{2,n}$  by first defining a more general family of homomorphisms turning loops of {\em framed} embeddings of spheres of various dimensions into bundles of cobordisms and hence into self-diffeomorphisms of smooth manifolds. In the introduction above, we had $2$--spheres embedded in $4$--manifolds, but we did not mention framings of these $2$--spheres. Below, we will work with framed spheres and then at the end of the section when we relate this back to the terminology of the introduction, we will see why we can ignore the framing issues. In addition to adding framings, we allow the spheres in the domain to have arbitrary dimension, and we allow the target spaces of the embeddings to have arbitrary dimension.

Fix an $m$--manifold $X$, for some $m \geq 2$, fix integers $0 < k < m$ and $n \geq 0$, and let $\#^n (S^k \times S^{m-k})^\dagger$ denote a punctured $\#^n (S^k \times S^{m-k})$. As in the introduction, the puncture is needed so that we can view $\#^n (S^k \times S^{m-k})^\dagger$ as a subspace of $\#^{n+1} (S^k \times S^{m-k})^\dagger$. Now consider the following space of embeddings of collections of {\em framed} spheres:
\[ \mathcal{FS}_{k,n}(X) = \Emb(\amalg^n (S^k \times B^{m-k}),X \#^n (S^k \times S^{m-k})^\dagger) \]
In the notation, $\mathcal{FS}$ stands for ``framed spheres'', the parameter $k$ tells us that the spheres are $k$--spheres, the subscript $n$ tells us how many $k$--spheres and how many $(S^k \times S^{m-k})^\dagger$ summands there are, and $X$ is the base $m$--manifold. Picking a fixed point $p \in S^{m-k}$ and a disk neighborhood $U$ of $p$ parametrized as $B^{m-k}$, we get a natural basepoint $\amalg^n (S^k \times U) \subset X \#^n (S^k \times S^{m-k})^\dagger$ for $\mathcal{FS}_{k,n}(X)$, which we will again generally suppress in our notation, understanding that $\mathcal{FS}_{k,n}(X)$ is a pointed space.

We now define a homomorphism
\[ \mathcal{FH}_{k,n} : \pi_1(\mathcal{FS}_{k,n}(X)) \to \pi_0(\Diff^+(X)) \]
as follows: Represent an element $b$ of $\pi_1(\mathcal{FS}_{k,n}(X))$ by a smoothly varying $1$--parameter family of embeddings 
\[ \beta_t: \amalg^n (S^k \times B^{m-k}) \into X \#^n (S^k \times S^{m-k})^\dagger \subset X \#^n (S^k \times S^{m-k}) \]
with $\beta_0 = \beta_1 = \amalg^n (S^k \times U)$. Use this to define a smooth embedding
\[ \overline{\beta}: S^1 \times \amalg^n (S^k \times B^{m-k}) \into S^1 \times X \#^n (S^k \times S^{m-k}) \]
via $\overline{\beta}(t,p) = (t,\beta_t(p))$, identifying $S^1$ with $[0,1]/1\!\sim\! 0$.
Now perform fiberwise surgery along $\overline{\beta}$, i.e remove 
\[\overline{\beta}(S^1 \times \amalg^n (S^k \times \mathring{B}^{m-k}))\]
and replace with 
\[ S^1 \times \amalg^n(B^{k+1} \times S^{m-k-1})\] 
via the gluing map
\[ \overline{\beta}: S^1 \times \amalg^n (S^k \times S^{m-k-1}) \into S^1 \times X \#^n (S^k \times S^{m-k}) \]
Let $Y$ denoting the resulting $(m+1)$--manifold; because the surgery respects the $S^1$--factor, $Y$ is a bundle over $S^1$. The fiber over $0$ is equal to the result of surgering $X \#^n (S^k \times S^{m-k})$ along $\amalg^n (S^k \times B^{m-k})$. Since the complement of $S^k \times B^{m-k}$ in $S^k \times S^{m-k}$ is canonically identified with $S^k \times B^{m-k}$, the result of surgering $S^k \times S^{m-k}$ along $S^k \times B^{m-k}$ is canonically identified with $(S^k \times B^{m-k}) \cup (B^{k+1} \times S^{m-k-1})$, which is canonically identified with $S^m$ as the boundary of $B^{k+1} \times B^{m-k}$. Thus the fiber over $0$ can be canonically identified with $X$, and thus the monodromy of $Y$ is a well-defined element of $\pi_0(\Diff^+(X))$. This element of $\pi_0(\Diff^+(X))$ is our definition of $\mathcal{FH}_{k,n}(b)$, where $b \in \pi_1(\mathcal{FS}_{k,n}(X))$ is the element represented by the family $\beta_t$.

We now give an equivalent definition of $\mathcal{FH}_{k,n}$ in terms of parameterized handle attachment rather than parameterized surgery, more in line with Cerf theory and more useful for the rest of this paper.

The reader may find it helpful to think of $S^1$--parameterized handle attachment as attachment of {\em round handles}, as introduced by~\cite{Asimov} and motivated presumably by earlier work by Bott~\cite{Bott}. In the dimension and indices we care about, a round $(m+2)$--dimensional $(k+1)$--handle is $S^1 \times B^{k+1} \times B^{m-k}$ attached along $S^1 \times S^k \times B^{m-k}$. If one attaches a $(m+2)$--dimensional round $(k+1)$--handle to an $(m+2)$--dimensional manifold $Z$ which is itself equipped with a fibration $\pi: Z \to S^1$, and if the attaching map $\gamma: S^1 \times S^k \times B^{m-k} \to \partial Z$ respects the fibration in the sense that $\pi \circ \gamma$ is the projection map $S^1 \times S^k \times B^{m-k} \to S^1$, then the resulting manifold $Z'$ again fibers over $S^1$ so that each fiber of $Z'$ is obtained from the corresponding fiber of $Z$ by attaching a standard $(m+1)$--dimensional $(k+1)$--handle to $Z$. 

As in the parameterized surgery discussion, given $b \in \pi_1(\mathcal{FS}_{k,n}(X))$, represent $b$ by the family of embeddings $\beta_t$ and use this to produce the embedding
\[ \overline{\beta}: S^1 \times \amalg^n (S^k \times B^{m-k}) \into S^1 \times X \#^n (S^k \times S^{m-k}) \]
We will build a $(m+2)$--manifold $Z$ which fibers over $S^1$ and is a fiberwise cobordism from $S^1 \times X$ to the $(m+1)$--dimensional bundle $Y$ constructed in the preceding paragraph. By a ``fiberwise cobordism'', see Figure~\ref{F:fiberwisecobordism}, we mean that $\partial Z = -(S^1 \times X) \amalg Y$ and that the bundle maps $Z \to S^1$, $S^1 \times X \to S^1$ and $Y \to S^1$ all commute with the inclusions of $S^1 \times X$ and $Y$ into $Z$, so that the fiber $Z_t$ of $Z$ over some $t \in S^1$ is itself a cobordism from $\{t\} \times X$ to the fiber $Y_t$ of $Y$ over $t$.

\begin{figure}
  \labellist
  \small\hair 2pt
  \pinlabel $S^1$ [r] at 2 23
  \pinlabel $t$ at 114 8
  \pinlabel $X$ [br] at 114 60
  \pinlabel $Y_t$ [tr] at 114 142
  \pinlabel $Z_t$ [l] at 114 110
  \pinlabel {\rotatebox{-6}{$S^1 \times X$}} [b] at 50 57
  \pinlabel $Y$ [b] at 70 143
  \pinlabel $Z$ at 30 115
  \endlabellist
  \centering
  \includegraphics{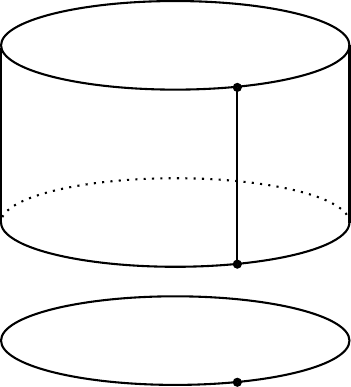}
  \caption{A fiberwise cobordism $Z$ from $S^1 \times X$ to $Y$, with fiber $Z_t$ being a cobordism from $X$ to $Y_t$.}
  \label{F:fiberwisecobordism}
\end{figure}

To build $Z$, first let $W$ equal $[0,1] \times X$ with $n$ $(m+1)$--dimensional $k$--handles attached along $n$ unlinked $0$--framed unknotted $S^{k-1}$'s lying in a ball in $\{1\} \times X \subset [0,1] \times X$. Thus $W$ is a cobordism from $X$ to $X \#^n (S^k \times S^{m-k})$. Now consider $S^1 \times W$, which is a cobordism from $S^1 \times X$ to $S^1 \times X\#^n (S^k \times S^{m-k})$. Let $Z$ be the result of attaching $n$ {\em round} $(m+2)$--dimensional $(k+1)$--handles to the top boundary $S^1 \times X\#^n (S^k \times S^{m-k})$ of $S^1 \times W$ using $\overline{\beta}$ as the attaching map. Equivalently, use each $\beta_t$ as the attaching map for $n$ $(m+1)$--dimensional $(k+1)$--handles attached to $\{t\} \times X \#^n (S^k \times S^{m-k}) \subset \{t\} \times Y$; interpreting this as smoothly varying fiberwise handle attachment gives our construction of $Z$. The top boundary of $Z$ is now a bundle over $S^1$. Since, at $t=0$, the atttaching maps $\beta_0$ for the $n$ $(k+1)$--handles are in standard cancelling position with respect to the $n$ $k$--handles used to build $W$, we see that the fiber of $Z$ over $0$ is canonically identified with $[0,1] \times X$, and thus the top boundary of this fiber is canonically identified with $X$. Thus the monodromy of the top boundary of $Z$ is a well-defined element of $\pi_0(\Diff^+(X))$, which we define to be $\mathcal{FH}_{k,n}(b)$.

The fact that these two definitions of $\mathcal{FH}_{k,n}(b)$ agree is simply because, just as handle attachment modifies the boundary of a manifold by surgery, parameterized handle attachment modifies the boundary of a bundle by parameterized surgery.

Thanks to the punctures, we have basepoint preserving inclusions
\[ \ldots \subset \mathcal{FS}_{k,n}(X) \subset \mathcal{FS}_{n+1}(X,k) \subset \ldots \]
and thus induced maps on $\pi_1$ and thus a direct limit
\[ \ldots \to \pi_1(\mathcal{FS}_{k,n}(X)) \to \pi_1(\mathcal{FS}_{k,n+1}(X)) \to \ldots \to \pi_1(\mathcal{FS}_{k,\infty}(X)) \]
Again, we do not really care about the limiting spaces, just the groups. Thus one should think of an element of $\pi_1(\mathcal{FS}_{k,\infty}(X))$ as an equivalence class of loops in some $\mathcal{FS}_{k,n}(X)$, where two such loops are equivalent if they become homotopic after including into some $\mathcal{FS}_{k,N}(X)$ for some $N  \geq n$. It is not hard to see that these induced maps on $\pi_1$ commute with the $\mathcal{FH}_{k,n}$ homomorphisms, so that we can take the direct limit of the homomorphisms $\mathcal{FH}_{k,n}$

We summarize our discussion thus far as follows:
\begin{definition} \label{D:FHn}
 Given an $m$--manifold $X$ with $m \geq 2$, given an integer $k$ with $0 < k < m$, and an integer $n \geq 0$, the associated {\em parameterized framed handle attachment homomomorphism} is the homomomorphism
 \[ \mathcal{FH}_{k,n} : \pi_1(\mathcal{FS}_{k,n}(X)) \to \pi_0(\Diff^+(X)) \]
 defined in the following way: Use $b \in \pi_1(\mathcal{FS}_{k,n}(X))$ to build a bundle over $S^1$ in which each fiber is a cobordism from $X$ to some $m$--manifold built with $n$ standardly attached $(m+1)$--dimensional $k$--handles and $n$ $(m+1)$--dimensional $(k+1)$--handles attached according to a loop of embeddings representing $b$. The monodromy of the top boundary of this fiberwise cobordism is $\mathcal{FH}_{k,n}(b)$. Taking the direct limit as $n$ goes to $\infty$ gives the {\em limit homomorphism}
 \[ \mathcal{FH}_{k,\infty} : \pi_1(\mathcal{FS}_\infty(X,k)) \to \pi_0(\Diff^+(X)) \]
\end{definition}

As in the unframed setting of the introduction, we have two natural subspaces of $\mathcal{FS}_{k,n}(X)$: Let $\mathcal{FS}^0_{k,n}(X)$ denote those embeddings of $\amalg^n (S^k \times B^{m-k}$) into $X \#^n (S^k \times S^{m-k})$ for which the $S^k \times \{0\}$ in the $i$'th $S^k \times B^{m-k}$ transversely intersects the $\{p\} \times S^{m-k}$ in the $j$'th $S^k \times S^{m-k}$ summand transversely at $\delta_{ij}$ points. Let $\widehat{\mathcal{FS}}_{k,n}(X)$ denote the subspace of embeddings with the property that the image of $\amalg^n (S^k \times B^{m-k})$ is disjoint from $\amalg^n (S^k \times \{p'\})$ for some fixed $p' \in S^{m-k} \setminus U$. Note that our basepoint lies in both of these subspaces.

\begin{proposition}
 For each $n$, $\imath_*(\pi_1(\mathcal{FS}^0_{k,n}(X)))$ is in the kernel of $\mathcal{FH}_{k,n}$. Thus in the limit, $\imath_*(\pi_1(\mathcal{FS}^0_{k,\infty}(X)))$ is in the kernel of $\mathcal{FH}_{k,\infty}$.
\end{proposition}
\begin{proof}
 This is because when the $S^k \times \{0\}$'s are dual to the $\{p\} \times S^{m-k}$'s for all $t$ in a loop of embeddings $\beta_t$, then for all $t$ the $k$--handles and $(k+1)$--handles cancel uniquely (this is Cerf's {\em l'unicit\'e de mort}~\cite{Cerf}). Thus the cobordism $Z$ from $S^1 \times X$ to $Y$ becomes a bundle over $S^1$ with a Morse function without critical points which restricts to each fiber as a Morse function without critical points. This implies that the monodromy at the top is isotopic to the monodromy at the bottom, and since the monodromy of $S^1 \times X$ is the identity then the monodromy of $Y$ is isotopic to the identity.
\end{proof}
 
Now we discuss the relationship to spaces of spheres without framings. Recall that
\[ \mathcal{S}_{k,n}(X) = \Emb(\amalg^n S^k, X \#^n (S^k \times S^{m-k})^\dagger) \]
There is an obvious ``framing forgetting'' map of pointed spaces
\[ \mathcal{F} : \mathcal{FS}_{k,n}(X) \to \mathcal{S}_{k,n}(X) \]
given by restricting an embedding of $S^k \times B^{m-k}$ to $S^k = S^k \times \{0\}$. We have obvious definitions of the subspaces $\mathcal{S}^0_{k,n}(X)$ and $\widehat{\mathcal{S}}_{k,n}(X)$.

\begin{lemma}
 The parameterized framed handle attachment homomorphism 
 \[ \mathcal{FH}_{k,n} : \pi_1(\mathcal{FS}_{k,n}(X)) \to \pi_0(\Diff^+(X)) \]
 factors through the image of the homomorphism
 \[ \mathcal{F}_* : \pi_1(\mathcal{FS}_{k,n}(X)) \to \pi_1(\mathcal{S}_{k,n}(X)) \]
 induced by the framing forgetting map $\mathcal{F}$, giving a {\em handle attachment homomorphism}
 \[ \mathcal{H}_{k,n} : \mathcal{F}_*(\pi_1(\mathcal{FS}_{k,n}(X))) \to \pi_0(\Diff^+(X)). \]
 Thus if $\mathcal{F}_*$ is surjective we have a homomorphism
 \[ \mathcal{H}_{k,n} : \pi_1(\mathcal{S}_{k,n}(X)) \to \pi_0(\Diff^+(X)) \]
 and $\imath_*(\pi_1(\mathcal{S}^0_{k,n}(X)))$ is contained in the kernel of $\mathcal{H}_{k,n}$. In the limit we get
 \[ \mathcal{H}_{k,\infty} : \pi_1(\mathcal{S}_{k,\infty}(X)) \to \pi_0(\Diff^+(X)) \]
 with $\imath_*(\pi_1(\mathcal{S}^0_{k,\infty}(X)))$ contained in the kernel.

\end{lemma}

\begin{proof}
Palais~\cite{Palais} shows that restriction maps such as $\mathcal{F}$ are locally trivial and thus satisfy the homotopy lifting property. The fiber of $\mathcal{F}$ is the space of framings of a fixed $S^k$, i.e. (up to homotopy) maps from $S^k$ to $SO(m-k)$. Note that the fiber over the basepoint is actually a subspace of $\mathcal{FS}^0_{k,n}(X)$ and thus $\pi_1$ of the fiber lands in the kernel of $\mathcal{FH}_{k,n}$. As a consequence, even though 
\[ \mathcal{F}_* : \pi_1(\mathcal{FS}_{k,n}(X)) \to \pi_1(\mathcal{S}_{k,n}(X)) \]
may not be injective, if the fiber is not simply connected, we still have that $\mathcal{FH}_{k,n}$ induces a well-defined homomorphism $\mathcal{H}_{k,n}$ from the image $\mathcal{F}_*(\pi_1(\mathcal{FS}_{k,n}(X)))$ of $\mathcal{F}_*$ in $\pi_1(\mathcal{S}_{k,n}(X))$ to $\pi_0(\Diff^+(X))$. All of this also commutes with the inclusion maps from $n$ to $n+1$ giving the results for $\mathcal{H}_{k,\infty}$.
\end{proof}

Finally we return to the case of relevance to Theorem~\ref{T:H2infSurj}. Here we have $m=4$ and $k=2$ and the base manifold $X=S^4$.

\begin{lemma} \label{L:EverythingButSurjectivity}
 The induced map
 \[ \mathcal{F}_* : \pi_1(\mathcal{FS}_{2,n}(S^4)) \to \pi_1(\mathcal{S}_{2,n}(S^4)) \]
 is surjective. Thus we get a well-defined homomorphism
 \[ \mathcal{H}_{2,n} : \pi_1(\mathcal{S}_{2,n}(S^4)) \to \pi_0(\Diff^+(S^4)) \]
 with $\imath_*(\mathcal{S}^0_{2,n}(S^4))$ in the kernel, and a limit homomorphism
 \[ \mathcal{H}_{2,\infty} : \pi_1(\mathcal{S}_{2,\infty}(S^4)) \to \pi_0(\Diff^+(S^4)) \]
 with $\imath_*(\mathcal{S}^0_{2,\infty}(S^4))$ in the kernel.
\end{lemma}

\begin{proof}
The fibers of the framing forgetting map $\mathcal{F}$ are path-connected, i.e. a $2$--sphere in a $4$--manifold with self-intersection $0$ has only one framing up to isotopy, since $\pi_2(SO(2))=0$. Thus the long exact sequence of homotopy groups gives the desired surjectivity.
\end{proof}

\section{Surjectivity of $\mathcal{H}_{2,\infty}$} \label{S:Surjectivity}

In this section we will complete the proof of Theorem~\ref{T:H2infSurj}, by showing that $\mathcal{H}_{2,\infty} : \pi_1(\mathcal{S}_\infty) \to \pi_0(\Diff^+(S^4))$ is surjective.

We will use Cerf theoretic techniques, beginning with a pseudoisotopy. Recall that a pseudoisotopy from the identity diffeomorphism of a manifold $X$ to a diffeomorphism $\phi: X \to X$ is a diffeomorphism $\Phi: [0,1] \times X \to [0,1] \times X$ which restricts to $\{0\} \times X$ as the identity and to $\{1\} \times X$ as $\phi$.
\begin{lemma}
 Every orientation preserving self-diffeomorphism of $S^4$ is pseudoisotopic to the identity.
\end{lemma}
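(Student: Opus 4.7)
The plan is to reduce the claim to Kervaire--Milnor's computation $\Theta_5 = 0$ via two standard steps. First I would establish the equivalence: an orientation-preserving $\phi : S^4 \to S^4$ is pseudoisotopic to the identity if and only if $\phi$ extends to an orientation-preserving diffeomorphism of $D^5$. The easy direction takes a pseudoisotopy $F : [0,1] \times S^4 \to [0,1] \times S^4$ and glues the identity on a sub-ball $D^5_{1/2} \subset D^5$ to the appropriate reparametrization of $F$ on the collar $[1/2,1] \times S^4$. The reverse direction, starting from $\Phi : D^5 \to D^5$ with $\Phi|_{S^4} = \phi$, uses an ambient isotopy in the interior to move $\Phi(0)$ to $0$, then the connectedness of $\mathrm{GL}_5^+(\mathbb{R})$ to straighten the derivative at $0$, and finally a local linearization to make $\Phi$ the identity on a small ball $D^5_\epsilon$; the restriction of the adjusted $\Phi$ to the annular region $D^5 \setminus \mathrm{int}\, D^5_\epsilon \cong [0,1] \times S^4$ is then the desired pseudoisotopy.

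With this equivalence in hand, it suffices to show every orientation-preserving $\phi$ extends over $D^5$. For this I would form the twisted double $\Sigma_\phi := D^5 \cup_\phi D^5$. A standard Mayer--Vietoris and van Kampen calculation shows $\Sigma_\phi$ is a closed, simply connected $5$-manifold with the homology of $S^5$, hence a homotopy $5$-sphere. Invoking $\Theta_5 = 0$ (Kervaire--Milnor), there exists a diffeomorphism $\Psi : \Sigma_\phi \to S^5$.

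Finally, I would convert $\Psi$ into the required disk extension. The image $\Psi(D^5_1)$ of the first factor is a smoothly embedded closed $5$-ball in $S^5$; by smooth Schoenflies in dimension $5$ (again a consequence of $\Theta_5 = 0$ together with isotopy extension), postcomposing $\Psi$ with an ambient diffeomorphism of $S^5$ lets us assume $\Psi(D^5_1) = D^5_-$ and $\Psi(D^5_2) = D^5_+$. Then $\Psi$ restricts to diffeomorphisms $\alpha, \beta : D^5 \to D^5$ whose boundary values satisfy $\alpha = \beta \circ \phi$ on $S^4$ by the gluing relation in $\Sigma_\phi$; consequently $\beta^{-1} \circ \alpha : D^5 \to D^5$ restricts to $\phi$ on $S^4$, providing the extension.

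The only genuinely deep input is the external fact $\Theta_5 = 0$; the rest is routine isotopy-extension and Schoenflies manipulation. An alternative packaging would avoid invoking Schoenflies by directly using Smale's simply connected smooth $h$-cobordism theorem in dimension $5$ to trivialize the cobordism underlying any pseudoisotopy, but either route localizes the difficulty in the same classical result.
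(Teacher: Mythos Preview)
Your proof is correct and follows essentially the same strategy as the paper: both build the twisted sphere $D^5 \cup_\phi D^5$ (the paper writes it as $S^5_{[-1,0]} \cup_\phi S^5_{[0,1]}$), invoke $\Theta_5 = 0$ from Kervaire--Milnor to identify it with $S^5$, and then straighten the resulting diffeomorphism on a pair of balls so that the intervening collar yields the pseudoisotopy. The only difference is packaging---the paper straightens $\Phi$ directly near both poles while you route through the equivalence with extensions over $D^5$---and note that what you call ``Schoenflies'' in your third step is really just Palais' theorem on ambient uniqueness of codimension-zero balls, which is elementary and does not require a second appeal to $\Theta_5 = 0$.
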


\begin{proof}
 Consider an orientation preserving diffeomorphism $\phi: S^4 \to S^4$. Let $f: S^5 \to \R$ be projection onto the last coordinate in $\R^6$, and for any interval $I \subset \R$, let $S^5_I = f^{-1}(I)$. Let $V$ be a smooth vector field on $S^5 \setminus \{(0,0,0,0,0,\pm 1)\}$ which is orthogonal to all level sets of $f$ and scaled so that $df(V)=1$. Let $X = S^5_{[-1,0]} \cup_\phi S^5_{[0,1]}$, where $\phi: \partial S^5_{[0,1]} = S^4 \to -S^4 = \partial S^5_{[-1,0]}$ is now seen as an orientation reversing gluing diffeomorphism. Arrange the gluing (i.e. the smooth structure on $X$) so that the vector field $V$ on the two halves of $X$ is still a smooth vector field on $X$, which we call $V_X$. Note that $X$ also inherits the Morse function $f$, which we label $f_X:X\to \R$, and we can use the same notation $X_I = f_X^{-1}(I) \subset X$. The point is that if $I \subset (-\infty,0]$ or if $I \subset [0,\infty)$ then $X_I = S^5_I$, i.e. they are actually equal sets, not just diffeomorphic manifolds.
 
 Now note that $X$ is homotopy equivalent to $S^5$ and therefore~\cite{KervaireMilnor,Levine} diffeomorphic to $S^5$. For some small $\epsilon>0$ we can assume that the diffemorphism $\Phi: S^5 \to X$ is the identity on $S^5_{[-1,-1+\epsilon]} = X_{[-1,-1+\epsilon]}$ and on $S^5_{[1-\epsilon,1]} = X_{[1-\epsilon,1]}$. Using flow along $V$ and $V_X$, respectively, and the standard identification of $\partial S^5_{[-1,-1+\epsilon]}$ with $S^4$, we can parametrize both $S^5_{[-1+\epsilon,1-\epsilon]}$ and $X_{[-1+\epsilon,1-\epsilon]}$ as $[-1+\epsilon,1-\epsilon] \times S^4$ and then $\Phi$ restricts to give a diffeomorphism from $[-1+\epsilon,1-\epsilon] \times S^4$ to itself. Furthermore, this map is the identity on $\{-1+\epsilon\} \times S^4$ and by continuity must equal $\phi$ on $\{1-\epsilon\} \times S^4$. After reparametrizing $[-1+\epsilon,1-\epsilon]$ as $[0,1]$ we get the desired pseudoisotopy.
 
\end{proof}

Given a pseudoisotopy $\Phi: [0,1] \times S^4 \to [0,1] \times S^4$ from the identity to $\phi: S^4 \to S^4$, let
\[ Z(\Phi) = [0,1] \times [0,1] \times S^4 / (1,y,p) \sim (0,\Phi(y,p)) \]
be the mapping torus for $\Phi$, interpreted as a bundle over $S^1$ and as a fiberwise cobordism from $S^1 \times S^4$ to $Y(\phi) = [0,1] \times S^4/(1,p) \sim (0,\phi(p))$. We use the variable $y$ for the second $[0,1]$ factor because later this will play the role of a ``height'' rather than a ``time parameter''. We will reserve $t$ for the first $[0,1]$ factor, which in fact is the base $S^1 = [0,1] / 1 \!\sim\! 0$ variable.
\begin{definition} \label{D:FiberwiseHandle}
A {\em fiberwise handle decomposition} of $Z(\Phi)$ is a decomposition of $Z(\Phi)$ into a collar neighborhood $[0,1] \times S^1 \times S^4$ of the bottom boundary $S^1 \times S^4$ and a collection of round handles $S^1 \times B^k \times B^{5-k}$, for various $k$, on each of which the bundle map $Z(\Phi) \to S^1$ pre-composed with the characteristic map of the handle agrees with the projection $S^1 \times B^k \times B^{5-k} \to S^1$. This induces an ordinary handle decompositon of each fiber $Z(\Phi)_t$ as a collar neighborhood $[0,1] \times \{t\} \times S^4$ of the bottom boundary $\{t\} \times S^4$ and a collection of handles $\{t\} \times B^k \times B^{5-k}$.
\end{definition}

Once we establish the following proposition it will not take much work to translate the result into our main surjectivity of $\mathcal{H}_{2,\infty}$ result.

\begin{proposition} \label{P:AllTheCerfTheory}
 The cobordism $Z(\Phi)$ has a fiberwise handle decomposition with the following properties, for some $n \in \N$:
 \begin{enumerate} 
  \item For each $t \in S^1$, the induced handle decomposition of the $5$--dimensional fiber $Z(\Phi)_t$ over $t$ involves exactly $n$ $2$--handles and $n$ $3$--handles.
  \item There is a fixed embedding 
  \[ \alpha: \amalg^n(S^1 \times B^3) \into S^4 \]
  such that the $n$ $2$--handles for each $Z(\Phi)_t$ are attached to $\{t\} \times [0,1] \times S^4$ along $\alpha$, as an embedding into $\{t\} \times \{1\} \times S^4$. In other words, the attaching maps for the $2$--handle do not vary with $t$. As a result, for each $t$ the $4$--manifold immediately above these $2$--handles is canonically identified with $\#^n (S^2 \times S^2)$, and putting all the fibers together, the $5$--manifold immediately above these round $2$--handles is canonically identified with $S^1 \times \#^n (S^2 \times S^2)$.
  \item For each $t$, the attaching $2$--spheres for the $n$ $3$--handles of $Z(\Phi)_t$ are an embedding 
  \[ \beta_t: \amalg^n S^2 \into \#^n (S^2 \times S^2)^\dagger \subset \#^n (S^2 \times S^2) \] 
  which varies smoothly in $t$, forming a loop of embeddings of $\amalg^n S^2$ into $\#^n (S^2 \times S^2)^\dagger$ starting and ending at the standard embedding $\amalg^n (S^2 \times \{p\})$. In other words, the $3$--handles are attached along a based loop $\beta_t$ in $\mathcal{S}_{2,n}(S^4)$.
 \end{enumerate}

\end{proposition}
The ``canonical identification'' mentioned in item (2) in the statement can be made completely explicit, if desired, by first fixing the standard handle decomposition of $S^2 \times B^3$ with a single $0$--handle and a single $2$--handle, using this to fix the standard handle decomposition of $\natural^n S^2 \times B^3$ with one $0$--handle and $n$ $2$--handles, and then removing a small neighborhood of the index $0$ critical point.

\begin{proof}[Proof of Proposition~\ref{P:AllTheCerfTheory}]

 Let $f_0: [0,1] \times S^4 \to [0,1]$ be projection onto the first factor, i.e. $f_0(y,p)=y$, let $V_0 = \partial_y$ be the unit vector field on $[0,1] \times S^4$ in the $[0,1]$ direction, let $(f_1,V_1) = \Phi^*(f_0,V_0) = (f_0 \circ \Phi, D\Phi^{-1}(V_0))$, and let $(f_t,V_t)$ be a generic homotopy of functions with gradient-like vector fields from $(f_0,V_0)$ to $(f_1,V_1)$. 
 
 To preempt potential confusion, note that that $[0,1]$ factor with coordinate $y$ in the domain $[0,1] \times S^4$ of each function $f_t$ should not be confused with the $[0,1]$ parameter with coordinate $t$. Furthermore, each function $f_t$ maps to $[0,1]$, and this $[0,1]$ target space, on which we will use the variable $z$, should also not be confused with either of the other two $[0,1]$'s.
 
 Hatcher and Wagoner (Chapter~VI, Proposition~3, page~214 of~\cite{HatcherWagoner}) show that such a family of functions $f_t$ with gradient-like vector fields $V_t$ can be homotoped rel $t \in \{0,1\}$ so as to arrange the following properties (see Figure~\ref{F:Nice23Cerf}):
 \begin{figure}
  \labellist
  \small\hair 2pt
  \pinlabel $2$ [tr] at 45 20 
  \pinlabel $2$ [tr] at 49 24 
  \pinlabel $2$ [tr] at 56 31 
  \pinlabel $3$ [br] at 45 67 
  \pinlabel $3$ [br] at 49 63 
  \pinlabel $3$ [br] at 56 55 
  \endlabellist
  \centering
  \includegraphics[width=10cm]{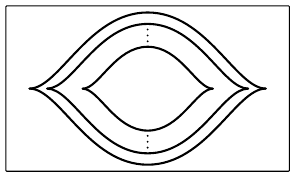}
  \caption{A nice Cerf graphic with only critical points of index $2$ and $3$.}
  \label{F:Nice23Cerf}
\end{figure}
 \begin{itemize}
  \item For each fixed value of the parameter $t \in [0,1]$:
  \begin{itemize}
   \item The only Morse critical points of $f_t$ are critical points of index $2$ and $3$.
   \item All critical points of $f_t$ map to distinct critical values. In other words, if $p$ and $q$ are distinct critical points of $f_t$ with critical values $z_p = f_t(p)$ and $z_q = f_t(q)$, then $z_p \neq z_q$.
   \item All critical points of index $2$ are below all critical points of index $3$. In other words, if $p$ is a critical point of index $2$ for $f_t$ with critical value $z_p = f_t(p)$ and $q$ is a critical point of index $3$ with critical value $z_q = f_t(q)$ then $z_p < z_q$.
   \item There are no handle slides. In other words, none of the flow lines of the vector field $V_t$ connect Morse critical points of the same index.
   \item At the moments of birth and death, the handle pair dying or being born does not run over any other handles. In other words, none of the flow lines of $V_t$ connect a non-Morse birth/death critical point to any other critical point.
   \item The function $f_t$ has at most one non-Morse birth/death critical point.
  \end{itemize}
  \item All births of cancelling pairs of critical points happen before all deaths of cancelling pairs. In other words, if $f_{t_0}$ has a non-Morse birth critical point and $f_{t_1}$ has a non-Morse death critical point, then $t_0 < t_1$.
 \end{itemize}
 
 Recall that $Z(\Phi) = [0,1] \times [0,1] \times X / (1,y,p) \sim (0,\Phi(y,p))$. In this context, let $Z(\Phi)_t = \{t\} \times [0,1] \times X$ be the fiber over $t \in S^1 = [0,1] / 1 \!\sim\! 0$. 
 Since $(f_1,V_1) = \Phi^* (f_0,V_0)$, we get a function $F: Z(\Phi) \to [0,1]$ and vector field $V$ on $Z(\Phi)$ such that $(F,V)|_{Z(\Phi)_t} = (f_t,V_t)$. Each fiber $Z(\Phi)_t$ of $Z(\Phi) \to S^1$ gets a handle decomposition from $(f_t,V_t)$ (allowing for birth and death handle decompositions) but it will take some work to arrange that these satisfy the remaining properties as stated in Proposition~\ref{P:AllTheCerfTheory}. First let us characterize each fiber $Z(\Phi)_t$, with its given handle decomposition, as much as possible in terms of handle attaching data in $\{t\} \times S^4 \subset S^1 \times S^4$.
 
 Suppose that the births in our Morse functions $f_t$ occur at times $0 < 1/8 < t_1 < t_2 < \ldots < t_n < 1/4$ and that the deaths occur at times $ 3/4 < t'_n < \ldots < t'_2 < t'_1 < 7/8 < 1$. We will show how each cobordism $Z(\Phi)_t$ can be described as built from $\{t\} \times [0,1] \times S^4$ according to handle attaching data. 
 
 To do this we establish a few notational conventions. First, all of our embeddings of spheres and disks of various dimensions are framed embeddings, but to keep the terminology minimal we will sometimes suppress mention of framings. Second, given a framed embedding $\epsilon$ of a sphere in an $m$--manifold $X$, let $X(\epsilon)$ denote the result of surgering $X$ along $\epsilon$. Third, a framed embedding $\delta$ of a disk $D^k$ into an $n$--manifold $X$ gives us several auxiliary pieces of information:
 \begin{itemize}
  \item We get a framed embedding $\alpha$ of $S^{k-1} = \partial D^k$ into $X$.
  \item We get the surgered manifold $X(\alpha)$.
  \item In $X(\alpha) = X \setminus \alpha(S^{k-1} \times \mathring{B}^{m-k+1}) \cup_\alpha (B^k \times S^{m-k})$, there is a natural framed embedding $\beta$ of $S^k$ into $X(\alpha)$ which coincides with $\delta$ away from the surgery, i.e. in $X \setminus \alpha(S^{k-1} \times \mathring{B}^{m-k+1})$, and which coincides with the core $B^k \times \{0\}$ of the surgery inside the surgered region $B^k \times S^{m-k}$.
  \item This embedding $\beta$ of $S^k$ also comes with a dual $(m-k)$--sphere $\beta^*$ which is the meridian to $\alpha$, i.e. $\alpha(\{p\} \times S^{m-k})$, so that $\beta$ and $\beta^*$ intersect transversely at one point in $X(\alpha)$.
 \end{itemize}
 This is the basic model for the result of attaching a cancelling pair of a $(m+1)$--dimensional $k$--handle and a $(m+1)$--dimensional $(k+1)$--handle, with the attaching data for the pair being described entirely in $X$ by the framed disk $\delta$. Thus we also see that $X(\alpha)(\beta)$ is canonically identified with $X$. In this proof we will work with the case $k=2$ and $k+1=3$, but later in the paper we will be interested in the case $k=1$ and $k+1=2$.
 
 The notation introduced above also makes sense when the $\delta$'s, $\alpha$'s or $\beta$'s are embeddings of disjoint unions of disks or spheres. Using this, we now describe the form of the explicit $t$--varying handle attaching data that gives the fiberwise construction of $Z(\Phi)$, i.e. the data that shows how to construct each $Z(\Phi)_t$ starting with $\{t\} \times [0,1] \times S^4$ and attaching various $5$--dimensional handles.
 \begin{itemize}
  \item For $0 \leq t < t_1$, no handles are attached, i.e. $Z(\Phi)_t = \{t\} \times [0,1] \times S^4$.
  \item For $t=t_1$, there is a framed embedding $\delta_1$ of a disk $D^2$ into $S^4$ such that $Z(\Phi)_{t_1}$ is built from $\{t_1\} \times [0,1] \times S^4$ by attaching a cancelling pair of a $5$--dimensional $2$--handle and $3$--handle, with the $2$--handle attached along the framed embedding $\alpha^1_{t_1} = \delta_1|_{S^1}$, and with the $3$--handle attached along the resulting framed embedding $\beta^1_{t_1}$ of $S^2$ into $S^4(\alpha^1_{t_1})$.
  \item For $t_1 \leq t \leq t_2$, we have a $t$--varying framed embedding $\alpha_t = \alpha^1_t$ of $S^1$ into $S^4$ and a $t$--varying framed embedding $\beta_t = \beta^1_t$ of $S^2$ into $S^4(\alpha_t)$. For $t_1 \leq t < t_2$, $Z(\Phi)_t$ is built from $\{t\} \times [0,1] \times S^4$ by attaching a $2$--handle along $\alpha_t$ and then a $3$--handle along $\beta_t$. At $t=t_1$, the $\alpha_t$ and $\beta_t$ agree with the $\alpha^1_t$ and $\beta^1_t$ from the previous point.
  \item For $t=t_2$, there is a framed embedding $\delta_2$ of $D^2$ into $S^4$ disjoint from the images of $\alpha^1_{t_2}$ and $\beta^1_{t_2}$, such that $Z(\Phi)_{t_2}$ is built from $\{t_2\} \times [0,1] \times S^4$ by attaching
  \begin{itemize}
   \item first a $5$--dimensional $2$--handle along $\alpha^1_{t_2}$,
   \item then a pair of cancelling $2$-- and $3$--handles in which the $2$--handle is attached along $\alpha^2_{t_2} = \delta_2|_{S^1}$ and the $3$--handle is attached along the resulting framed $2$--sphere $\beta^2_{t_2}$ in $S^4(\alpha^1_{t_2})(\alpha^2_{t_2})$, and
   \item then a $3$--handle attached along $\beta^1_{t_2}$, which can be seen as a framed $2$--sphere in $S^4(\alpha^1_{t_2})$, in $S^4(\alpha^1_{t_2})(\alpha^2_{t_2})$ or in $S^4(\alpha^1_{t_2})(\alpha^2_{t_2})(\beta^2_{t_2}) \cong S^4(\alpha^1_{t_2})$.
  \end{itemize}
  \item For $t_2 \leq t \leq t_3$, we have a $t$--varying framed embedding $\alpha_t = \alpha^1_t \amalg \alpha^2_t$ of $S^1 \amalg S^1$ into $S^4$ and a $t$--varying framed embedding $\beta_t = \beta^1_t \amalg \beta^2_t$ of $S^2 \amalg S^2$ into $S^4(\alpha_t)$, agreeing with the $\alpha^1_{t_2}$, $\alpha^2_{t_2}$, $\beta^2_{t_2}$ and $\beta^1_{t_2}$ of the preceding point when $t=t_2$, so that $Z(\Phi)_t$ is built from $\{t\} \times [0,1] \times S^4$ by attaching $2$--handles along $\alpha_t$ and then $3$--handles along $\beta_t$.
  \item This process continues with each birth at time $t_i$ governed by a new framed disk $\delta_i$, generating a new framed $S^1$, $\alpha^i_{t_i}$, and a new framed $S^2$, $\beta^i_{t_i}$, which then join the previous framed spheres to create $\alpha_t = \alpha^1_t \amalg \ldots \amalg \alpha^i_t$ and $\beta_t = \beta^1_t \amalg \ldots \amalg \beta^i_t$, which are the attaching spheres for $2$-- and $3$--handles for $t_t \leq t < t_{i+1}$.
  \item Reversing time we see the deaths governed by (most likely quite different) disks $\delta_n', \ldots, \delta_1'$ and the same pattern of framed $S^1$'s and $S^2$'s in between these times.
  \item For $t_n \leq t \leq t_n'$, there is a $t$--parametrized family $\alpha_t$ of framed embeddings of $\amalg^n S^1$ into $S^4$ and a $t$--parametrized family $\beta_t$ of framed embeddings of $\amalg^n S^2$ into $S^4(\alpha_t)$ which constitute the attaching data for the $n$ $2$--handles and $n$ $3$--handles used to construct each $Z(\Phi)_t$ in this range.
 \end{itemize}
We will now improve the format of this data somewhat. First, we can arrange that for some small $\epsilon>0$, on the time interval $t_1 \leq t \leq t_1 + \epsilon$ the embeddings $\alpha_t$ and $\beta_t$ are independent of $t$, i.e. the first framed circle and sphere do not move for a short time after their birth. Next, since a birth happens at a point, we can make the second birth happen at an earlier time so that in fact $t_1 < t_2 < t_1+\epsilon$. Repeating this, and doing the same in reverse with the deaths, we can assume that on the whole interval $t_1 \leq t \leq t_n$ and on the whole interval $t_n' \leq t \leq t_1'$, the $\alpha_t$ and $\beta_t$ are independent of $t$ except for the fact that at each $t_i$ a new $\alpha^i_t$ and $\beta^i_t$ is added to the mix (and ditto for the deaths).
  
Thus now the governing data for the constructions of each $Z(\Phi)_t$ can be more succinctly described by the following data:
\begin{itemize}
 \item A framed embedding $\delta = \delta_1 \amalg \ldots \amalg \delta_n$ of $\amalg^n D^2$ into $S^4$ (for the births), defining framed embeddings $\alpha$ of $\amalg^n S^1$ into $S^4$ and $\beta$ of $\amalg^n S^2$ into $S^4(\alpha)$. We will call these the ``birth disks''.
 \item A framed embedding $\delta' = \delta_1' \amalg \ldots \amalg \delta_n'$ of $\amalg^n D^2$ into $S^4$ (for the deaths), defining framed embeddings $\alpha'$ of $\amalg^n S^1$ into $S^4$ and $\beta'$ of $\amalg^n S^2$ into $S^4(\alpha')$. We will call these the ``death disks''.
 \item A $t$--parameterized family, for $t \in [1/4,3/4]$, of framed embeddings $\alpha_t = \alpha^1_t \amalg \ldots \amalg \alpha^n_t$ of $\amalg^n S^1$ into $S^4$, with $\alpha_{1/4} = \alpha$ and $\alpha_{3/4} = \alpha'$.
 \item A $t$--parameterized family, for $t \in [1/4,3/4]$, of framed embeddings $\beta_t = \beta^1_t \amalg \ldots \amalg \beta^n_t$ of $\amalg^n S^2$ into $S^4(\alpha_t)$, with $\beta_{1/4} = \beta$ and $\beta_{3/4} = \beta'$.
\end{itemize}
In the time interval $t \in [1/8,1/4]$, the pairs of $2$--handles and $3$--handles are born one after another but their attaching circles and spheres do not move after birth, until all pairs are born and then the motion starts at $t=1/4$. Similarly all motion stops at $t=3/4$ and then the pairs die one after another in the time interval $t \in [3/4,3/8]$.

Our next step is to arrange that $\alpha_t = \alpha$ for all $t \in [1/4,3/4]$, using a standard ``run everything off the end'' argument. We give this argument in careful detail here, and then we appeal to a similar argument without as much detail in the next stage of the proof. Let $\tilde{\alpha}_t$ be a family of embeddings of $\amalg^n S^1 \into S^4$ defined for $t \in [0,1]$ as follows:
\begin{itemize}
 \item For $t \in [0,1/4]$, $\tilde{\alpha}_t = \alpha_{1/4} = \alpha$.
 \item For $t \in [1/4,3/4]$, $\tilde{\alpha}_t = \alpha_t$.
 \item For $t \in [3/4,7/8]$, $\tilde{\alpha}_t = \alpha_{3/4} = \alpha'$.
 \item For $t \in [7/8,1]$, $\tilde{\alpha}_t = \tilde{\alpha}_{7-7t}$; this simply means that on $[7/8,1]$, the family $\tilde{\alpha}_t$ is the same as $\tilde{\alpha}_t$ on $[0,7/8]$, but sped up and run backwards so that $\tilde{\alpha}_1 = \tilde{\alpha}_0$.
\end{itemize}
Use the isotopy extension theorem to produce a family of diffeomorphisms $\psi_t: S^4 \to S^4$, for $t \in [0,1]$, such that $\psi_t \circ \tilde{\alpha}_t = \tilde{\alpha}_0 = \alpha$ for all $t \in [0,1]$ and satisfying:
\begin{itemize}
 \item For $t \in [0,1/4]$, $\psi_t = \id$.
 \item For $t \in [3/4,7/8]$, $\psi_t = \psi_{3/4}$, i.e. $\psi_t$ does not vary with $t$ in this time interval.
 \item For $t \in [7/8,1]$, $\psi_t = \psi_{7-7t}$ and in particular,
 \item $\psi_1 = \id$.
\end{itemize}

Since $\psi_t$, as a loop of diffeomorphisms of $S^4$ starting and ending at the ending at the identity, simply traverses a path from $\psi_0 = \id$ to $\psi_{7/8}$ and then follows exactly the same path backwards to $\id$, there exists a $2$--parameter family of diffeomorphisms $\phi_{s,t}: S^4 \to S^4$ such that $\phi_{0,t} = \id$, $\phi_{1,t} = \psi_t$ and $\phi_{s,0} = \phi_{s,1} = \id$ for all $s \in [0,1]$. We can use this to produce a $2$--parameter family of gradient-like vector fields $V_{s,t}$ for the $1$--parameter family of Morse functions $f_t$ on $[0,1] \times S^4$ satisfying:
\begin{itemize}
 \item For each $(s,t) \in [0,1] \times [0,1]$, $V_{s,t}$ is gradient-like for $f_t$.
 \item For each $t \in [0,1]$, $V_{0,t} = V_t$.
 \item For each $s \in [0,1]$, $V_{s,0} = V_0$ and $V_{s,1} = V_1$.
 \item There are values $0<y_0<y_1<1$ such that, for all $t \in [0,1]$, there are no critical values of $f_t$ in $[0,y_1]$ and such that, for all $(s,t) \in [0,1] \times [0,1]$, $V_{s,t}$ agrees with $V_t$ outside $f_t^{-1}([y_0,y_1])$.
 \item Using flow along $V_t$ to identify $f_t^{-1}(y_0)$ and $f_t^{-1}(y_1)$ with $S^4$, we have that for all $(s,t) \in [0,1] \times [0,1]$ downward flow along $V_{s,t}$ from $f^{-1}(y_1)$ to $f^{-1}(y_0)$ is the diffeomorphism $\phi_{s,t}$.
\end{itemize}

Then we see that, for the $1$--parameter family $(f_t,V_{1,t})$, the attaching circles for the $2$--handles are precisely $\phi_{1,t} \circ \alpha_t = \psi_t \circ \alpha_t = \alpha$. Now we use $V_{1,t}$ as our new gradient-like vector field and we have succeeded in making sure that the $2$--handle attaching maps do not move with time, i.e. $\alpha_t = \alpha$ for all $t \in [1/4,3/4]$. The ``trick'' used is that in the time interval $t \in [7/8,1]$ on which we run everything backwards, there are no critical points, so we do not have to worry about what running things backwards does to the attaching data for handles. Also note that, because $\psi_t = \id$ does not vary with $t$ on the intervals $t \in [1/8,1/4]$ and $t \in [3/4,7/8]$ when the births and deaths occur, we still have the property that the attaching circles and spheres for the $2$-- and $3$--handles do not move after their births until $t=1/4$ and that all motion stops at $t=3/4$, after which the cancelling handle pairs die one by one. The only differences are that the attaching spheres $\alpha_t$ for the $2$--handles do not vary at all with $t$, over the entire time interval $t \in [1/4,3/4]$. We have preserved our orginal configuration of birth disks $\delta$, but our death disks $\delta'$ have now changed, but they still do not vary with $t$ during the death interval $t \in [3/4,7/8]$. Although $\delta'$ has now changed, we will relabel the new death disks as $\delta'$.

Now we would like to arrange that $\delta'=\delta$. This is easy to arrange if we allow the boundaries of the disks to move, but we have already arranged that $\partial \delta' = \partial \delta = \alpha$ and we want to preserve this feature. We do this as an application of Gabai's $4$--dimensional lightbulb theorem, in particular the multiple spheres version Theorem~10.1 in~\cite{GabaiLightBulb}.

Recall from our discussion of handle and surgery terminology earlier that, given the $2$--$3$--birth disks $\delta$, we immediately get the attaching $2$--spheres for the $3$--handles $\beta$ and their dual $2$--spheres $\beta^*$. The same holds for the death disks $\delta'$, giving attaching $2$--spheres $\beta'$ with dual $2$--spheres $\beta'^*$. But since $\partial \delta = \partial \delta'$ and since these dual spheres can be taken to the meridians to the boundaries of these disks, in fact we now have arranged that $\beta'^* = \beta^*$. Thus, thinking about our $1$--parameter family $\beta_t$ of attaching spheres for the $3$--handles for $t \in [1/4,3/4]$, we know that $\beta_{1/4}$ and $\beta_{3/4}$ have a common set of dual spheres $\beta^*$ and in fact $\beta_{1/4}$ and $\beta_{3/4}$ agree in a neighborhood of $\beta^*$. Since $\beta_{3/4}$ is isotopic to $\beta_{1/4}$ (although this isotopy does not preserve geometric duality with $\beta^*$), they are homotopic and thus the lightbulb theorem applies. The conclusion of the theorem is that $\beta_{3/4}$ is isotopic to $\beta_{1/4}$ via an isotopy that fixes a neighborhood of $\beta^*$. This isotopy happens in the surgered manifold $S^4(\alpha)$, but since surgering $S^4(\alpha)$ along $\beta^*$ recovers $S^4$, the fact that the isotopy fixes a neighborhood of $\beta^*$ means precisely that we get an isotopy rel. boundary of $\delta_{3/4} = \delta'$ to $\delta_{1/4} = \delta'$ in $S^4$.

Let us assume that $\beta_t = \beta_{3/4}$ for all $t \in [5/8,3/4]$. We can use the isotopy from the preceding paragraph and a ``run everything off the end argument'' similar to the $\alpha_t$ argument above to modify our family of gradient-like vector fields $V_t$ so as to finally arrange that $\delta' = \delta$ as desired. More precisely, let $\tilde{\delta}_t$, for $t \in [5/8,3/4]$, be a family of embeddings of $\amalg^n D^2$ into $S^4$ such that $\tilde{\delta}_{5/8} = \delta'$ and $\tilde{\delta}_{3/4} = \delta$. Extend this to an ambient isotopy $\psi_t: S^4 \to S^4$ for $t \in [0,1]$ such that:
\begin{itemize}
 \item For all $t \in [0,5/8]$, $\psi_t = \id$. 
 \item For all $t \in [5/8,3/4]$, $\psi_t \circ \delta' = \tilde{\delta}_t$.
 \item For all $t \in [3/4,7/8]$, $\psi_t = \psi_{3/4} = \psi_{7/8}$.
 \item For $t \in [7/8,1]$, $\psi_t = \psi_{13/8 - t}$, i.e $\psi_t$ is the same as $\psi_t$ on $[5/8,3/4]$ but run backwards in time.
\end{itemize}
As before, $\psi_t$ is a null homotopic loop of diffeomorphisms of $S^4$ based at $\id$, so we let $\phi_{s,t}$ be a homotopy rel $t \in \{0,1\}$ from $\phi_{0,t} = \id$ to $\phi_{1,t} = \psi_t$, and then use this to modify our gradient like vector field $V_t$ rel $t \in \{0,1\}$. The upshot is that we are able to modify our attaching maps by $\psi_t$ and thus arrange that our death disks $\delta'$ become equal to our birth disks $\delta$.

To complete the proof, since the birth and death disks are now equal, we can move the births closer and closer to $t=0$ and move the deaths closer and closer to $t=1$ and then merge the deaths with the births at $t = 0 = 1 \in S^1 = [0,1]/1\!\sim\!0$. Now we have a $t$--parameterized family $(f_t,V_t)$ giving a fiberwise handle decomposition of $Z(\Phi)$ satisfying almost everything as advertised in the statement of the proposition. The last detail to arrange is that the loop of embeddings $\beta_t$ of $\amalg^n S^2 \into \#^n(S^2 \times S^2)$ all miss a single fixed point in $\#^n(S^2 \times S^2)$. This is done simply by observing that the trace of the family is a smooth map of a $3$--manifold $S^1 \times \amalg^n S^2$ into a $4$--manifold which, by Sard's theorem, must miss a point.

\end{proof}

\begin{proof}[Proof of Theorem~\ref{T:H2infSurj}]
In Section~\ref{S:LoopsToDiffs} we have already constructed the homomorphisms $\mathcal{H}_{2,n}$, shown that it commutes with $j_*$, and thus constructed $\mathcal{H}_{2,\infty}$, and we have shown that $\imath_*(\pi_1(\mathcal{S}^0_{2,\infty}(S^4)))$ lies in the kernel of $\mathcal{H}_{2,\infty}$; this was all summarized as Lemma~\ref{L:EverythingButSurjectivity}. It remains to prove that $\mathcal{H}_{2,\infty}$ is surjective.

Since $\mathcal{H}_{2,\infty}$ is a limit of maps $\mathcal{H}_{2,n}$, what we are really trying to prove is the following:

For any orientation preserving diffeomorphism $\phi: S^4 \to S^4$, there is some $n \in \N$ and some loop $\beta_t$ in $\mathcal{S}_n$ based at the basepoint $\amalg^n (S^2 \times \{p\})$, such that $\mathcal{H}_{2,n}([\beta_t]) = [\phi] \in \pi_0(\Diff^+(S^4))$. Choose a pseudoisotopy $\Phi$ from $\id$ to $\phi$, build the bundle of cobordisms $Z(\Phi)$ and apply Proposition~\ref{P:AllTheCerfTheory}. This gives us exactly the loop $\beta_t$, and we see that $Z(\Phi)$ is exactly the result of parameterized handle attachment as in Definition~\ref{D:FHn}.
\end{proof}

\section{Turning $(2,3)$--handle pairs into $(1,2)$--handle pairs} \label{S:23to12}

We now need to work toward the connection with Montesinos twins and the proof of Theorem~\ref{T:TwinsGenerate}, that twists along Montesinos twins generate the subgroup of $\pi_0(\Diff^+(S^4))$ corresponding to loops of $2$--spheres which remain disjoint from parallel copies of the basepoint $2$--spheres. More precisely, recall that $\widehat{\mathcal{S}}_{2,n}(S^4)$ is the space of embeddings of $\amalg^n S^2$ into $\#^n (S^2 \times S^2) ^\dagger$ which remain disjoint from $\amalg^n (S^2 \times \{p'\})$ for some fixed $p' \in S^2$, with basepoint being $\amalg^n (S^2 \times \{p\})$ for $p \neq p'$. We want to study the subgroup $\mathcal{H}_{2,\infty}(\imath_*(\pi_1(\widehat{\mathcal{S}}_{2,\infty}(S^4))))$.

As in the title of this section, we will frequently refer to $(k,k+1)$--handle pairs.
\begin{definition} \label{D:kkplus1pair}
 A {\em $(k,k+1)$--handle pair} is a pair of handles, one being a $k$--handle and the other being a $(k+1)$--handle, such that the attaching sphere for the $(k+1)$--handle can be isotoped in the level above the $k$--handle to intersect the belt sphere for the $k$--handle transversely at one point. A $(k,k+1)$--handle pair is {\em in cancelling position} if the attaching sphere and the belt sphere intersect transversely at exactly one point, without performing an isotopy first.
\end{definition}
In other words, for a general $(k,k+1)$--handle pair, after an isotopy of the attaching sphere of the $(k+1)$--handle, the pair of handles can be cancelled, but they are not necessarily in cancelling position before this isotopy is performed.
 
To relate $\mathcal{H}_{2,\infty}(\imath_*(\pi_1(\widehat{\mathcal{S}}_{2,\infty}(S^4))))$ to Montesinos twins we will first need to relate it to families of $(1,2)$--handle pairs coming from loops of framed circles in $\#^n (S^1 \times S^3)$. In particular, using the notation from Section~\ref{S:LoopsToDiffs}, in this section we will prove:
\begin{theorem} \label{T:23to12}
 For any $n$, $\mathcal{H}_{2,n}(\imath_*(\pi_1(\widehat{\mathcal{S}}_{2,n}(S^4)))) = \mathcal{FH}_{1,n}(\pi_1(\mathcal{FS}_{1,n}(S^4)))$.
\end{theorem}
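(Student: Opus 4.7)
The plan is to establish the equality of two subgroups of $\pi_0(\mathop{Diff^+}(S^4))$ by Cerf-theoretic handle trading, performed uniformly in $1$-parameter families. Both subgroups arise as images of monodromy maps from loops of handle attaching data for $5$-dimensional cobordism families $Y_t$ from $S^4$ to $S^4$: the subgroup $\mathcal{FH}_n(\pi_1(\mathcal{FS}_n(S^4,1)))$ presents each $Y_t$ via a $(1,2)$-handle decomposition, with middle level $\#^n(S^1 \times S^3)$ and $2$-handles attached along a loop $\gamma_t$ of framed circles, while the subgroup $\mathcal{H}_n(\imath_*(\pi_1(\widehat{\mathcal{S}}_n)))$ presents each $Y_t$ via a $(2,3)$-handle decomposition, with middle level $\#^n(S^2 \times S^2)$ and $3$-handles attached along a loop $\alpha_t$ of $2$-spheres disjoint from the parallel duals $\amalg^n (S^2 \times \{p'\})$. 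I will convert between these two descriptions by introducing canceling handle pairs and then rearranging by handle slides.

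For the inclusion $\mathcal{H}_n(\imath_*(\pi_1(\widehat{\mathcal{S}}_n))) \subset \mathcal{FH}_n(\pi_1(\mathcal{FS}_n(S^4,1)))$, I start with a $(2,3)$-family coming from a loop $\alpha_t$ and introduce $n$ canceling $(1,2)$-pairs below, producing an enlarged $(1,2,2,3)$-decomposition. Handle slides then rearrange so that the original $(2,3)$-pair cancels, leaving only the new $(1,2)$-pair. The essential input is the disjointness of $\alpha_t$ from the parallel duals: this places the loop $\alpha_t$ inside the complement of $\amalg^n(S^2 \times \{p'\})$ in $\#^n(S^2 \times S^2)^\dagger$, which is diffeomorphic to the simply-connected $4$-manifold $\natural^n(S^2 \times B^2)$ and contains a continuously-varying family of geometrically dual spheres to the original $2$-handle belt spheres. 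That geometric room is exactly what allows the $(2,3)$-cancellation to be realized as a family of ambient isotopies of the $3$-handle attaching maps, after which the remaining $(1,2)$-pair has its $2$-handles attached along a loop $\gamma_t$ of framed circles in $\#^n(S^1 \times S^3)^\dagger$. For the reverse inclusion I run the dual construction: introduce canceling $(2,3)$-pairs above a $(1,2)$-family with the new $3$-handle attaching spheres chosen standardly in $\#^n(S^2 \times S^2)$ and disjoint from designated parallel duals, then cancel the original $(1,2)$-pair via analogous slides. The resulting $\alpha_t$ remains disjoint from the chosen parallel duals because the slides preserve this disjointness throughout the loop.

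The main obstacle is the uniformity in the parameter $t$: all handle manipulations must be carried out as $1$-parameter families, not merely for individual cobordisms. The disjointness condition defining $\widehat{\mathcal{S}}_n$ is precisely what makes this possible, since it provides a continuously-varying family of Whitney-like disks in the complement of the duals and thereby allows the handle slides to be realized as ambient isotopies of the attaching data rather than as births or deaths of critical points in the Cerf graphic. In the reverse direction, the analogous observation is that $3$-handle attaching spheres chosen initially disjoint from the designated parallel duals remain so under the family of slides. This uniformity is what guarantees that the resulting loops $\gamma_t$ and $\alpha_t$ are well-defined in their respective ambient spaces up to the equivalences used in the definitions of $\mathcal{FH}_n$ and $\mathcal{H}_n$, and it is where I expect the bulk of the technical work of the proof to lie.
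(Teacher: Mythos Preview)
Your high-level strategy---convert between $(2,3)$- and $(1,2)$-families by exploiting the $\widehat{\mathcal{S}}_n$ disjointness---is the right one, but the mechanism you propose is not the one the paper uses, and as written it has a genuine gap. You want to introduce auxiliary cancelling $(1,2)$-pairs below and then ``slide so that the original $(2,3)$-pair cancels.'' But you never say which slides, and there is no evident reason the original pair becomes geometrically cancellable after sliding over the new $2$-handles: cancellation of a $(2,3)$-pair requires each $3$-handle sphere to meet the corresponding $2$-handle belt sphere transversely once, which is the $\mathcal{S}^0_n$ condition, not the $\widehat{\mathcal{S}}_n$ condition. Your observation that $\alpha_t$ lives in $\natural^n(S^2\times B^2)$ and that this manifold ``contains geometrically dual spheres to the belt spheres'' is true, but those duals are the parallels $S^2\times\{p'\}$ themselves, not $\alpha_t$; noting their existence does not put $\alpha_t$ into cancelling position. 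The analogous step in your reverse direction (making the moving circles $\gamma_t$ geometrically dual to the $1$-handle belt $3$-spheres, uniformly in $t$, in the non-simply-connected $\#^n(S^1\times S^3)$) is equally unjustified.

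The paper's argument is different in kind: rather than adding and cancelling extra handles, it \emph{reinterprets} the same handles via a $5$-dimensional ``dotted'' carving notation. The fixed standard $2$-handles are rewritten as carved dual $2$-disks $\alpha^\bullet$ in $[0,1]\times S^4$; the $\widehat{\mathcal{S}}_n$ hypothesis is used precisely to isotope the $3$-handle spheres $\beta_t$ off the surgered region so that they lie in $S^4$ itself. One then extends $\beta_t$ to $3$-balls $\beta^\bullet_t$ and \emph{switches the dots}: the carved balls $\beta^\bullet_t$ now encode $1$-handles, and the circles $\partial\alpha^\bullet_t$ become the $2$-handle attaching circles. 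No auxiliary handle pairs are introduced.

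The delicate point your outline does not anticipate is that this extension to balls need not close up over the loop (there is no reason $\beta^\bullet_1=\beta^\bullet_0$), so after the switch the data is a path, not a loop. The paper repairs this by an ambient isotopy carrying $\beta^\bullet_t$ back to $\beta^\bullet_0$, and then invokes Gabai's $4$-dimensional lightbulb theorem to return the circles $\partial\alpha^\bullet_t$ to their starting position while remaining in cancelling position with $\beta^\bullet_0$. That lightbulb step is essential and has no counterpart in your sketch; any correct version of your slide-and-cancel approach would almost certainly need a comparable ingredient at exactly the same place.
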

This means that any isotopy class of diffeomorphisms of $S^4$ that can be realized by a family of cobordisms built with $n$ $(2,3)$--handle pairs governed by a loop in $\widehat{\mathcal{S}}_{2,n}(S^4)$ can also be realized by a family of cobordisms built with $n$ $(1,2)$--pairs governed by a loop in $\mathcal{FS}_{1,n}(S^4)$, which is the space of framed embeddings of a disjoint union of $n$ circles in $\#^n (S^1 \times S^3)^\dagger$.

The essential idea is that, when the $3$--handle attaching maps lie in $\widehat{\mathcal{S}}_{2,n}(S^4)$, we can use a $5$--dimensional analog of the $4$--dimensional Kirby calculus ``dotted circle'' notation to represent our $2$--handles as dotted circles and our $3$--handles as $2$--spheres moving in the complement of these dotted circles. Then we can do the $5$--dimensional analog of the $4$--dimensional trick of ``switching dots and zeros'' to turn the dotted circles into attaching circles for $2$--handles and to turn the $2$--spheres into dotted $2$--spheres which represent $1$--handles. Note that in general this process changes the underlying cobordism. In addition to the subtleties involved in working in dimension $5$ rather than dimension $4$, we have the added complication that we need to do this in a $1$--parameter family. The author would like to thank Peter Teichner and Danica Kosanovic for first making him aware of the potential of $5$--dimensional dotted circle and sphere notation when discussing Watanabe's work~\cite{Watanabe}.

For the reader who is not comfortable with $4$--dimensional dotted circle notation, the discussion by Gompf and Stipsicz in Section~5.4 of~\cite{GompfStipsicz} should be helpful, with many more details than given here and with helpful illustrations. In general dimensions, a trivial $n$--dimensional $k$--handle is a $k$--handle which could be cancelled by a $(k+1)$--handle, in which case one way to attach a trivial $k$--handle to a $n$--manifold $X$ is to attach a cancelling $(k,k+1)$--handle pair to $X$ and then remove a neighborhood of the $B^{n-k-1}$ co-core of the $(k+1)$--handle. If one then cancels the $(k,k+1)$--handle pair, one is simply removing a regular neighborhood of a properly embedded boundary parallel $B^{n-k-1}$ from $X$, which can be indicated as a dotted $S^{n-k-2}$ in $\partial X$. Looking at the effect on $\partial X$, note that adding a trivial $k$--handle modifies $\partial X$ by a connected sum with $S^k \times S^{n-k-1}$, and that this can be also achieved by adding a trivial $(n-k-1)$--handle, so that from the point of view of the boundary the dotted $S^{n-k-2}$ can either be interpreted with its dot as describing a $k$--handle or simply as the attaching sphere for a $(n-k-1)$--handle. 

In the standard $4$--dimensional setting, $n=4$ and $k=1$, so that we have dotted circles. In our setting we will work with two cases, one where $n=5$ and $k=2$, giving dotted circles, and the other where $n=5$ and $k=1$, giving dotted $2$--spheres. We now describe these cases, and the $4$--dimensional case, more explicitly.

In dimension $4$, a dotted circle $\kappa: S^1 \into S^3$ in a Kirby diagram needs to be an unknot, and then one chooses a disk $\overline{\kappa}: D^2  \into S^3$ with $\partial \overline{\kappa} = \kappa$, pushes the interior of $\overline{\kappa}$ into a collar neighborhood $(-\epsilon,0] \times S^3$ of $S^3$ and then removes a regular neighborhood of $\kappa$. This has the same result as attaching a $4$--dimensional $1$--handle along an embedding $\zeta: S^0 \into S^3$ where $\zeta = \partial \overline{\zeta}$ for an embedding $\overline{\zeta}: B^1 \into S^3$ which is dual to $\overline{\kappa}$, i.e. $\overline{\zeta}$ transversely intersects $\overline{\kappa}$ at one point in their interiors.

There are several important points to note about this construction:
\begin{enumerate}
 \item In principle one needs to specify the disk $\overline{\kappa}$, not just its boundary $\kappa$. However, in dimension $3$ there is a unique disk bounded by an unknot so one ignores this issue. In higher dimensional analogs, and especially when working in $1$--parameter families, we should really keep track of the analog of the disk, not just the boundaries.
 \item Suppose that a $1$--handle is described by a dotted circle $\kappa: S^1 \into S^3$ bounding a disk $\overline{\kappa}: D^2 \into S^3$, and that a $2$--handle is attached along a knot $K: S^1 \into S^3 \setminus \kappa(S^1)$. If $K$ intersects $\overline{\kappa}$ transversely once, then the $1$--handle and $2$--handle are in cancelling position and can be cancelled.
 \item The real advantage, in $4$--dimensional Kirby calculus, of using dotted circle notation is that the attaching knots for the $2$--handles can be drawn entirely in $S^3$ even though they might run over some $1$--handles, and the $1$-- and $2$--handle information is given by a single link in $S^3$ including both the dotted circles and the attaching knots for the $2$--handles. Each $1$--handle creates an $S^1 \times S^2$ summand in the boundary of the $4$--manifold one is building, and the reason that we can draw everything in $S^3$ is that the attaching knots for the $2$--handles can be assumed to miss an $S^1 \times \{p\}$ in each of these $S^1 \times S^2$ summands, for some $p \in S^2$. In a $1$--parameter family this may not be the case, which is why one of the Kirby calculus moves when using dotted circle notation involves ``sliding a $2$--handle over a dotted circle'' as in Figure~5.36 of~\cite{GompfStipsicz}.
 \item Not all $1$--handles can be represented by a dotted circle. The $1$--handle needs to be a {\em trivial $1$--handle}, which means that {\em it could be cancelled} by a $2$--handle, since removing a neighborhood of a disk bounded by the dotted circle can be seen as first attaching the $1$--handle, then attaching a cancelling $2$--handle, and then removing a neighborhood of the co-core of that $2$--handle. A trivial $1$--handle is nothing more than a $1$--handle with both feet on the same component of the manifold to which it is being attached, and which does not create a non-orientable manifold after being attached. Alternatively, this condition is that the $1$--handle is attached along an $S^0$ that bounds a $B^1$ with a framing which extends across the $B^1$.
 \item Given a dotted circle $\kappa$, one can either use the dotted circle notation and interpret this as a $1$--handle or one can attach a $2$--handle along $\kappa$ with framing $0$. These produce different $4$--manifolds but the new boundaries created are the same. In other words, from the point of view of the $3$--manifold boundary, putting a dot or a $0$ on an unknotted circle in a Kirby diagram describes the same resulting $3$--manifold.
\end{enumerate}

Now we spell out, with more detail, the $5$--dimensional analogs of these points for $5$--dimensional $1$--handles. 
\begin{enumerate}
 \item Let $W$ be a $5$--dimensional cobordism from $4$--manifold $X_0$ to $4$--manifold $X_1$, and let $W'$ be the result of attaching a $5$--dimensional $1$--handle to $W$ via a framed embedding $\zeta: S^0 \times B^4 \into X_1$ which extends to a framed embedding $\overline{\zeta}: B^1 \times B^3 \into X_1$. This is a cobordism from $X_0$ to $X'_1$. Let $\beta^\bullet: B^3 \into X^1$ be the restriction of $\overline{\zeta}$ to $\{0\} \times B^3$. In other words, $\beta^\bullet$ is a dual $B^3$ to the $B^1$ bounded by the attaching $0$--sphere of the $1$--handle. The superscript $\bullet$ indicates that this is a ``dotted $3$--ball''. Let $W''$ be the $5$--manifold obtained by pushing the interior of $\beta^\bullet$ into a collar neighborhood $(-\epsilon,1] \times X_1$ of $X_1$ in $W$ and then removing a regular neighborhood of this pushed-in $3$--ball. This is a cobordism from $X_0$ to $X''_1$. Then $X'_1$ and $X''_1$ are diffeomorphic and $W''$ and $W'$ are diffeomorphic cobordisms. These diffeomorphisms are canonical in the sense that if we do this in parameterized families and build fiberwise cobordisms, we have corresponding fiberwise diffeomorphisms. For this reason we abuse notation and declare that $W'=W''$ and $X'_1=X''_1$.
 \item Since everything described above happens in a ball one can see directly that $X'_1 \cong X_1 \# (S^1 \times S^3)$. In the dotted $B^3$ notation, the $S^1$ factor is a linking meridian to the dotted $2$--sphere $\partial \beta^\bullet$, and the dotted $B^3$ itself is one hemisphere of the $S^3$ factor.
 \item Using dotted $3$--ball notation, the difference between $X'_1$ and $X_1$ is the $S^1 \times B^3$ part of the boundary of the neighborhood $B^2 \times B^3$ of the pushed-in $B^3$ that is removed from $W$ to produce $W'$. This $S^1 \times B^3$ is thus a regular neighborhood of $S^1 \times \{p\}$ in the $S^1 \times S^3$ summand of $X'_1 = X_1 \# (S^1 \times S^3)$, for some fixed $p \in S^3$.
 \item From this we see that if a $5$--dimensional $2$--handle is attached to $W'$ along an embedding $\alpha: S^1 \into X'_1 \cong X_1 \# (S^1 \times S^3)$ which is disjoint from $S^1 \times \{p\}$ for some fixed $p \in S^3$, then using dotted $B^3$ notation the attaching $S^1$ for the $2$--handle can be draw entirely in $X_1$ along with the dotted $B^3$'s.
 \item In this case, the $S^1$'s for the $2$--handles and the boundaries of the dotted $B^3$'s for the $1$--handles are disjoint, but the $S^1$'s may intersect the interiors of the dotted $B^3$'s, indicating that a $2$--handle is running over a $1$--handle.
 \item If the attaching $S^1$ for a $2$--handle transversely intersects a dotted $B^3$ for a $1$--handle exactly once, then the $(1,2)$--handle pair is in cancelling position and can be cancelled.
 \item Given the dotted $3$--ball $\beta^\bullet$, we could attach a $3$--handle along the $2$--sphere $\partial \beta^\bullet$ instead of the $1$--handle construction above. This produces a different cobordism, but the boundary $4$--manifold is exactly the same, since both constructions change the boundary by surgery along $\partial \beta^\bullet$.
\end{enumerate}

And finally we can give the analogous statements for $5$--dimensional $2$--handles.
\begin{enumerate}
 \item Let $W$ be a $5$--dimensional cobordism from $4$--manifold $X_0$ to $4$--manifold $X_1$, and let $W'$ be the result of attaching a $5$--dimensional $2$--handle to $W$ via a framed embedding $\alpha: S^1 \times B^3 \into X_1$ which extends to a framed embedding $\overline{\alpha}: B^2 \times B^2 \into X_1$. This is a cobordism from $X_0$ to $X'_1$. Let $\gamma^\bullet: B^2 \into X^1$ be the restriction of $\overline{\alpha}$ to $\{0\} \times B^2$. In other words, $\gamma^\bullet$ is a dual ``dotted disk'' to the disk bounded by the attaching circle of the $2$--handle. Let $W''$ be the $5$--manifold obtained by pushing the interior of $\gamma^\bullet$ into a collar neighborhood $(-\epsilon,1] \times X_1$ of $X_1$ in $W$ and then removing a regular neighborhood of this pushed-in disk. This is a cobordism from $X_0$ to $X''_1$ and again $W''$ and $W'$ are canonically diffeomorphic, and we again declare that $W'=W''$ and $X'_1=X''_1$.
 \item Now we have that $X'_1 \cong X_1 \# (S^2 \times S^2)$ where the first $S^2$ factor is a linking meridian to the dotted circle $\partial \gamma^\bullet$, and the dotted disk itself is one hemisphere of the second $S^2$ factor.
 \item Using dotted disk notation, the difference between $X'_1$ and $X_1$ is the $S^2 \times B^2$ part of the boundary of the neighborhood $B^3 \times B^2$ of the pushed-in $B^2$ that is removed from $W$ to produce $W'$. This $S^2 \times B^2$ is thus a regular neighborhood of $S^2 \times \{2\}$ in the $S^2 \times S^2$ summand of $X'_1 = X_1 \# (S^2 \times S^2)$, for some fixed $p \in S^2$.
 \item From this we see that if a $5$--dimensional $3$--handle is attached to $W'$ along an embedding $\beta: S^2 \into X'_1 \cong X_1 \# (S^2 \times S^2)$ which is disjoint from $S^2 \times \{p\}$ for some fixed $p \in S^2$, then using dotted disk notation the attaching $S^2$ for the $3$--handle can be draw entirely in $X_1$ along with the dotted disks.
 \item In this case, the $S^2$'s for the $3$--handles and the boundaries of the dotted disks for the $2$--handles are disjoint, but the $S^2$'s may intersect the interiors of the dotted disks, indicating that a $3$--handle is running over a $2$--handle.
 \item If the attaching $S^2$ for a $3$--handle transversely intersects a dotted disk for a $2$--handle exactly once, then the $(2,3)$--handle pair is in cancelling position and can be cancelled.
 \item Given the dotted disk $\gamma^\bullet$, we could attach a $2$--handle along the circle $\partial \gamma^\bullet$ instead of the $2$--handle construction above. This produces a (potentially) different cobordism, but the boundary $4$--manifold is again exactly the same, since both constructions change the boundary by surgery along $\partial \gamma^\bullet$.
\end{enumerate}

One might wonder whether we really need to keep track of the dotted $B^3$'s and disks or whether, as in the $4$--dimensional setting, one can just track the boundary spheres and circles. Budney and Gabai have shown~\cite{BudneyGabai} that unknotted $2$--spheres in $S^4$ can bound ``knotted'' $3$--balls, and of course, although all $S^1$'s in $S^4$ are unknotted, $2$--knots can be tied into any spanning disk for such an $S^1$. What really matters is whether these spanning disks and balls are isotopic in dimension $5$, and we leave this as an interesting question; the Budney-Gabai examples are in fact isotopic in $B^5$, but there might in principle be more complicated examples that remain nonisotopic even when pushed into $B^5$. \footnote{Added in proof: Daniel Hartman~\cite{HartmanB3sInB5} has in fact shown that this does not happen: any two $B^3$'s in $S^4$ with the same boundary become isotopic after pushing their interiors into $B^5$.} However, here we play it safe by working with ``dotted disks'' instead of ``dotted circles'' and ``dotted balls'' instead of ``dotted spheres''.

The following technical lemma will be needed in our proof of Theorem~\ref{T:23to12} as preparation for a ``dot switch'' argument. In the statement of the lemma there is no mention of handles, but to set it in context, think of $\gamma^\bullet_t$ as a family of dotted circles describing $5$--dimensional $2$--handles and think of $\beta_t$ as a family of attaching $2$--spheres for $5$--dimensional $3$--handles. The lemma constructs a family $\beta^\bullet_t$ of dotted $B^3$'s which can either be seen as simply auxiliary data to the $3$--handle attaching spheres or as dotted $B^3$'s for $1$--handles. Once the components of $\beta^\bullet_t$ are interpreted as dotted $B^3$'s describing $1$--handles, then one is ready to interpret the boundary $S^1$'s $\partial \gamma^\bullet_t$ as attaching circles for $2$--handles rather than as boundaries of dotted disks for $2$--handles. The work of this lemma is to find the dotted $B^3$'s and extend the families in time, preserving cancellation where necessary, so that both the dotted $B^3$'s and dotted $B^2$'s actually form loops of embeddings rather than just paths of embeddings.

\begin{lemma} \label{L:PreparingForDotSwitch}
 Suppose we are given a pair of $1$--parameter families of embeddings $\gamma^\bullet_t : \amalg^n B^2 \into S^4$ and $\beta_t: \amalg^n S^2 \into S^4$, for $t \in [0,1]$, satisfying the following properties:
 \begin{enumerate}
  \item $\beta_0$ is an unlink of unknots, i.e. $\beta_0$ extends to an embedding of $\amalg^n B^3 \into S^4$.
  \item $\gamma^\bullet_1 = \gamma^\bullet_0$ and $\beta_1 = \beta_0$.
  \item For all $t \in [0,1]$, $\beta_t$ and $\partial \gamma^\bullet_t$ are disjoint.
  \item The $i$'th component of $\beta_0=\beta_1$ transversely intersects the $j$'th component of $\gamma^\bullet_0=\gamma^\bullet_1$ at exactly $\delta_{ij}$ points.
 \end{enumerate}
 Then there exists an extension of $\gamma^\bullet_t$ and $\beta_t$ to $t \in [0,3]$ and a $1$--parameter family of embeddings $\beta^\bullet_t : \amalg^n B^3 \into S^4$  defined for all $t \in [0,3]$, satisfying the following properties:
 \begin{enumerate}
  \item For all $t \in [0,1]$, $\gamma^\bullet_t$ is the same as the given $\gamma^\bullet_t$ and $\beta_t$ is the same as the given $\beta_t$.
  \item For all $t \in [0,3]$, $\partial \beta^\bullet_t = \beta_t$.
  \item $\gamma^\bullet_3 = \gamma^\bullet_0$ and $\beta^\bullet_3 = \beta^\bullet_0$.
  \item For all $t \in [1,3]$, the $i$'th component of $\beta_t$ transversely intersects the $j$'th component of $\gamma^\bullet_t$ at exactly $\delta_{ij}$ points.
  \item The $i$'th component of $\partial \gamma^\bullet_3 = \partial \gamma^\bullet_0$ transversely intersects the $j$'th component of $\beta^\bullet_3 = \beta^\bullet_0$ at exactly $\delta_{ij}$ points.
  \item The path $\beta^\bullet_t$ is homotopic rel $t \in \{0,3\}$ in $\Emb (\amalg^n B^3,S^4)$ to the constant path.
 \end{enumerate}

\end{lemma}

We have extended to the time range $t \in [0,3]$ because the proof naturally involves two extensions, one on $[1,2]$ and one on $[2,3]$. Of course this time parameter can and will be reparameterized as needed.

\begin{proof}
 First we construct $\beta^\bullet_t$ on $t \in [0,1]$. Because $\beta_t$ and $\partial \gamma^\bullet_t$ are disjoint for all $t \in [0,1]$, we can use the isotopy extension theorem to find an ambient isotopy $\phi_t: S^4 \to S^4$, for $t \in [0,1]$, such that $\phi_0 = \id$, $\phi_t \circ \beta_0 = \beta_t$ and $\phi_t \circ \partial \gamma^\bullet_0 = \partial \gamma^\bullet_t$. Choose any extension of $\beta_0$ to an embedding $\beta^\bullet_0: \amalg^n B^3 \into S^4$ with the property that the $i$'th component of $\partial \gamma^\bullet_0$ transversely intersects the $j$'th component of $\beta^\bullet_0$ at exactly $\delta_{ij}$ points. This can be done because the $i$'th component of $\partial \gamma^\bullet_0$ is a meridian to the $i$'th component of $\beta_0$. Then let $\beta^\bullet_t = \phi_t \circ \beta^\bullet_0$.
 
 The most immediate problem at this point is that there is no reason to expect that $\beta^\bullet_1 = \beta^\bullet_0$. We now extend both $\beta^\bullet_t$ and $\gamma^\bullet_t$ to $t \in [1,2]$ using the isotopy $\phi_t$ in reverse. More precisely, for $t \in [1,2]$, let
 \[ \beta^\bullet_t = \beta^\bullet_{2-t} = \phi_{2-t} \circ \beta^\bullet_0 = (\phi_{2-t} \circ \phi_1^{-1}) \circ \beta^\bullet_1 \]
 and let
 \[ \gamma^\bullet_t = (\phi_{2-t} \circ \phi_1^{-1}) \circ \gamma^\bullet_1. \]
 Also let $\beta_t = \partial \beta^\bullet_t$. Note that when $t = 1$, $\phi_{2-t} \circ \phi_1^{-1} = \id$, so we do have a continuous extension of both $\beta^\bullet_t$ and $\gamma^\bullet_t$ from $t \in [0,1]$ to $t \in [0,2]$. Also note that now, on $t \in [1,2]$, since both $\gamma^\bullet_t$ and $\beta^\bullet_t$ are moved by the same isotopy $\phi_{2-t} \circ \phi_1^{-1}$, we have that the $i$'th component of $\partial \beta^\bullet_t$ transversely intersects the $j$'th component of $\gamma^\bullet_t$ at $\delta_{ij}$ points for all $t \in [1,2]$.
 
 Furthermore, since $\beta^\bullet_t = \beta^\bullet_{2-t}$ for $t \in [1,2]$, we have guaranteed that $\beta^\bullet_2 = \beta^\bullet_0$ and that the path $\beta^\bullet_t$ in $\Emb(\amalg^n B^3, S^4)$ is homotopic rel $t \in \{0,2\}$ to the constant path.
 
 Now, however, we have no reason to expect that $\gamma^\bullet_2 = \gamma^\bullet_0$, and this is the last thing that we fix, using the time interval $t \in [2,3]$. Both $\gamma^\bullet_2$ and $\gamma^\bullet_0$ have the property that their $i$'th components transversely intersect the $j$'th component of $\beta_2 = \beta_0$ in $\delta_{ij}$ points. Parameterize a regular neighborhood of $\beta_0(\amalg^n S^2)$ as $\amalg^n (S^2 \times B^2)$ and note that both $\gamma^\bullet_2$ and $\gamma^\bullet_1$ are now isotopic to small meridional disks $\amalg^n (\{p\} \times B^2)$ centered at the points of intersection between $\gamma^\bullet_2$, resp $\gamma^\bullet_0$, with $\beta_0$. These isotopies can be chosen so as to preserve the property that $\gamma^\bullet_t$ and $\beta_0$ transversely intersect at $\delta_{ij}$ points, simply by shrinking the disks without moving them. Running one of these isotopies forward for $t \in [2,2.4]$ and the other one backwards for $t \in [2.6,3]$, and connecting them in $t \in [2.4,2.6]$ by moving the small meridional disks inside the neighborhood $\amalg^n (S^2 \times B^2)$, we get a path of embeddings $\gamma^\bullet_t$ for $t \in [2,3]$ from the given $\gamma^\bullet_2$ to $\gamma^\bullet_3 = \gamma^\bullet_0$. For all $t \in [2,3]$, let $\beta^\bullet_t = \beta^\bullet_2 = \beta^\bullet_3 = \beta^\bullet_0$ and let $\beta_t = \partial \beta^\bullet_t$. Since we simply shrank the disks $\gamma^\bullet_2$ into the tubular neighborhood of $\beta_2 = \beta_t = \beta_3$, then moved these disks around the neighborhood, and then expanded back out along the disks $\gamma^\bullet_3 = \gamma^\bullet_3$, we see that we did not introduce any extra intersections between $\partial \gamma^\bullet_t$ and $\beta_t$ for $t \in [2,3]$, and thus maintained the $\delta_{ij}$ intersection property.
 
 Lastly, since $\beta^\bullet_t$ is $t$--invariant for $t \in [2,3]$, then we still have the property that $\beta_t$ is homotopic rel $t \in \{0,3\}$ to the constant path in $\Emb(\amalg^n B^3,S^4)$.
\end{proof}

\begin{proof}[Proof of Theorem~\ref{T:23to12}]
 Consider a cobordism $Z$ from $S^1 \times S^4$ to $Y = S^1 \times_\phi S^4$ for some $\phi \in \Diff^+(S^4)$, built as before as a family $Z_t$ of cobordisms, such that each $Z_t$ is built by attaching n $2$--handles to $[0,1] \times S^4$ and then $n$ $3$--handles to the result, with the attaching data for the handles varying smoothly with $t$. The $2$--handles are attached along a family $\alpha_t$ of framed embeddings of $\amalg^n S^1$ into $S^4$. In fact at this point $\alpha_t = \alpha_0$ does not vary with $t$ and is a standard embedding, so that $S^4(\alpha_t)$ is canonically identified with $\#^n (S^2 \times S^2)$. The $3$--handles are attached along a family $\beta_t$ of framed embeddings of $\amalg^n S^2$ into $\#^n (S^2 \times S^2) = S^4(\alpha_t)$, with $\beta_0 = \beta_1 = \amalg^n (S^2 \times \{p\})$, and with each $\beta_t$ disjoint from $\amalg^n (S^2 \times \{p'\})$.
 
 We would like to show that there is also a cobordism $Z'$ from $S^1 \times S^4$ to $Y = S^1 \times_\phi S^4$ for the same $\phi \in \Diff^+(S^4)$, but now built as a family $Z'_t$ of cobordisms such that each $Z'_t$ is buit by attaching first $n$ $1$--handles to $[0,1] \times S^4$ and then $n$ $2$--handles to the result, and such that the attaching maps for the $1$--handles are $t$--invariant. This will prove the theorem.

 Now, given our handle attaching data $\alpha_t$ and $\beta_t$ used to build $Z$, since the $\alpha_t$'s are invariant in $t$, and $\alpha_t = \alpha_0$ bounds a fixed collection of framed disks $\delta_0$, we can instead represent the $2$--handles by a (for now, $t$--invariant) $t$--parametrized family of $n$ dotted disks $\gamma^\bullet_t: \amalg^n B^2 \into S^4$. Note that $\partial \gamma^\bullet_t \neq \alpha_t$, but instead $\partial \gamma^\bullet_t$ is a linking circle to $\delta_0$. Also, we insist on maintaining the subscript $t$ even though these are $t$--invariant because we will shortly modify the family so as to lose $t$--invariance. Now the instructions for building the cobordism $Z$ are to build each $Z_t$ from $[0,1] \times S^4$ by pushing the interiors of the disks $\gamma^\bullet_t$ from $\{1\} \times S^4$ into the interior of $(1-\epsilon,1] \times S^4$ and removing their neighborhoods, and then attaching $3$--handles along $\beta_t$. Note that, after carving out the disks but before attaching the $3$--handles, the upper boundary of this cobordism $Y_t$ is the surgered $4$--manifold $S^4(\partial \gamma^\bullet_t)$. In other words, when looking at the $4$--dimensional boundary, we cannot tell whether we carved out the dotted disks or attached $2$--handles along their boundaries, because the resulting surgeries are the same.
 
 As noted in the preamble to this proof discussing $5$--dimensional dotted circle and sphere notation, because each $\beta_t$ is disjoint from $\amalg^n (S^2 \times \{p'\})$, we can isotope the family $\beta_t$ so that it never goes over the surgered region of $S^4(\partial \gamma^\bullet_t)$, and thus the entire handle attaching data now lives in $S^4$. Thus we can now describe each $Y_t$, and thus $Z$, via data entirely lying in $S^4$, i.e. $\gamma^\bullet_t : \amalg^n B^2 \into S^4$ and $\beta_t : \amalg^n S^2 \into S^4$. The only intersections occur between $\amalg^n S^2$ and the interiors of the disks $\amalg^n B^2$. At times $t=0$ and $t=1$, each $S^2$ intersects its corresponding $B^2$ transversely once and is disjoint from all the other $B^2$'s, i.e. the spheres and disks are in ``cancelling position''
 
 Our goal is now to ``switch the dots from the circles to the spheres'', i.e. to think of $\beta_t$ as being dotted spheres, thus corresponding to $5$--dimensional $1$--handles, and to think of $\partial \gamma^\bullet_t$ as attaching circles for $2$--handles, rather than dotted circles describing $2$--handles. Furthermore, we need to arrange that we actually end up with a loop of dotted $B^3$'s after this extension. This was the point of Lemma~\ref{L:PreparingForDotSwitch}. To use this result, homotope our given loops of embeddings so as to arrange that $\gamma^\bullet_t$ and $\beta_t$ are $t$--invariant on the time interval $t \in [1-\epsilon,1]$ for some small $\epsilon > 0$. Apply Lemma~\ref{L:PreparingForDotSwitch}, but reparameterizing the time parameters so that the time interval $[0,1-\epsilon]$ here is mapped to the time interval $[0,1]$ in the lemma, so that the time interval $[1,3]$ in the lemma is mapped to the time interval $[1-\epsilon,1]$ in this proof.
 
 This gives us $\gamma^\bullet_t$ and $\beta^\bullet_t$ defined for $t \in [0,1]$ such that:
 \begin{enumerate}
  \item $\gamma^\bullet_1 = \gamma^\bullet_0$ and $\beta^\bullet_1 = \beta^\bullet_0$.
  \item $\gamma^\bullet_t$ agrees with the original $\gamma^\bullet_t$ on $[0,1-\epsilon]$.
  \item $\partial \beta^\bullet_t$ agrees with the original $\beta_t$ on $[0,1-\epsilon]$.
  \item If we interpret $\gamma^\bullet_t$ as dotted disks for $2$--handles and interpret $\partial \beta^\bullet_t$ as attaching $S^2$'s for $3$--handles, then these give cancelling $(2,3)$--pairs for all $t \in [1-\epsilon,1]$.
  \item If instead we interpret $\beta^\bullet_t$ as dotted $B^3$'s for $1$--handles and interpret $\partial \gamma^\bullet_t$ as attaching circles for $2$--handles, then these give cancelling $(1,2)$--pairs for $t \in \{0,1\}$. 
 \end{enumerate}
 Thus we can use the fact that the dotted disks $\gamma^\bullet_t$ and the attaching $S^2$'s $\partial \beta^\bullet_t$ are in cancelling position for $t \in [1-\epsilon,1]$ to modify our fiberwise handle decomposition of $Z$ so that the new handle decomposition is in fact given by the new families $\gamma^\bullet_t$ and $\partial \beta^\bullet_t$ for all $t \in [0,1]$. One way to see this is to first cancel the original $(2,3)$--handle pairs on the time interval $[1-\epsilon,1]$, so that we have no handles on that time interval, with deaths of cancelling pairs at $t=1-\epsilon$ and births at $t=1=0$. Then we can introduce a birth slightly after $t=\epsilon$, let the $(2,3)$--pairs move between this time and until slightly before $t=1$ following the new $\gamma^\bullet_t$ and $\partial \beta^\bullet_t$, and then have the pairs die at the time slightly before $t=1$. Because the pairs are in cancelling position over this entire time interval, this family of handle decompositions does not change the bundle $Z$. Finally we cam merge the births and deaths at $t=\epsilon$ and at $t=1=0$ to get the desired result.

 Now ``switch all the dots from the $\gamma$'s to the $\beta$'s''. In other words, build a new fiberwise cobordism $Z'$ from $S^1 \times S^4$ to some $Y'$ with dotted $1$--handles given by $\beta^\bullet_t$ and $2$--handles attached along the circles $\partial \gamma^\bullet_t$.  Because the surgered $4$--manifolds produced by the dotted ball, respectively disk, constructions are the same as those produced by attaching handles along the boundaries of the balls, respectively disks, in fact the top boundary $Y'$ of our new fiberwise cobordism is the same as our original top boundary $Y = S^1 \times_\phi S^4$.

 Finally, because we arrange that the loop $\beta^\bullet_t$ is homotopically trivial in $\pi_1(\Emb(\amalg^n B^3,S^4))$, an ambient isotopy argument as in the proof of Proposition~\ref{P:AllTheCerfTheory} can now be used to arrange that $\beta^\bullet_t$ is independent of $t$, and thus the entire construction is governed by the loop of framed circles $\partial \gamma^\bullet_t$.

\end{proof}

\section{From many $(1,2)$ pairs to a single $(1,2)$ pair}

We now know that diffeomorphisms of $S^4$ that can be realized by a family of cobordisms built with $n$ $(2,3)$--handle pairs can also be realized by a family of cobordisms built with $n$ $(1,2)$--handle pairs, as long as the original governing loop of embeddings of $\amalg^n S^2$ into $\#^n (S^2 \times S^2)$ lies in $\widehat{\mathcal{S}_n}$. Before we get to Montesinos twins, we need now to show that every diffeomorphism of $S^4$ that can be a realized by a family of cobordisms built with $n$ $(1,2)$--handle pairs can be realized by a family built with a single $(1,2)$--handle pair.

\begin{theorem} \label{T:Many12toOne12}
 For any $n$, $\mathcal{FH}_{1,n}(\pi_1(\mathcal{FS}_{1,n}(S^4))) = \mathcal{FH}_{1,1}(\pi_1(\mathcal{FS}_{1,1}(S^4)))$.
 
\end{theorem}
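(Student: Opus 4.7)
The plan is to prove both inclusions. The $\supset$ direction is immediate: iterating the basepoint-preserving inclusion $\mathcal{FS}_k(S^4,1) \hookrightarrow \mathcal{FS}_{k+1}(S^4,1)$ of Section~\ref{S:LoopsToDiffs} gives $\mathcal{FS}_1(S^4,1) \hookrightarrow \mathcal{FS}_n(S^4,1)$ (adding $n-1$ standard circles at the basepoint), and this inclusion commutes with $\mathcal{FH}$, so $\mathcal{FH}_1(\pi_1(\mathcal{FS}_1(S^4,1))) \subset \mathcal{FH}_n(\pi_1(\mathcal{FS}_n(S^4,1)))$. The content of the theorem is the reverse inclusion.

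For $\subset$ I would induct on $n$, showing that every loop in $\mathcal{FS}_n(S^4,1)$ produces the same element of $\pi_0(\mathop{Diff^+}(S^4))$ as some loop in $\mathcal{FS}_{n-1}(S^4,1)$. Given a loop $\alpha_t = \alpha^1_t \amalg \cdots \amalg \alpha^n_t$ in $\mathcal{FS}_n(S^4,1)$, the associated cobordism $Z$ has fibers $Y_t$ built from $[0,1] \times S^4$ by attaching $n$ standard ($t$--invariant) $5$--dimensional $1$--handles and then $n$ $2$--handles along $\alpha_t$. The goal is to modify this family so that the $n$th $1$--$2$ pair is cancellable uniformly in $t$ and can be deleted, leaving a loop in $\mathcal{FS}_{n-1}(S^4,1)$ realizing the same element.

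The key step is to make $\alpha^n_t$ $t$--independent and in standard cancelling position with the $n$th $1$--handle. Apply parametrized isotopy extension to the single-component loop $\alpha^n_t$: there exists a family $\varphi_t$ of diffeomorphisms of $\#^n (S^1 \times S^3)^\dagger$ with $\varphi_0 = \id$ and $\varphi_t(\alpha^n_0) = \alpha^n_t$. Replacing $\alpha_t$ by $\varphi_t^{-1} \circ \alpha_t$ standardizes $\alpha^n_t$; the obstruction is that $\varphi_1$, while setwise preserving $S^1 \times \{p\}$ in the $n$th summand, need not be the identity on the other summands, so the new loop fails to close on $\alpha^1, \ldots, \alpha^{n-1}$. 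Absorb this mismatch by stabilizing with an auxiliary cancelling $1$--$2$ pair (which leaves the monodromy of $Z$ unchanged) and then applying handle slides together with Gabai's $4$--dimensional lightbulb theorem, in the spirit of the closing argument in the proof of Theorem~\ref{T:23to12}, to homotope the family back to a genuine loop based at the standard embedding with only $n-1$ nontrivial components. Once $\alpha^n_t$ is standard and constant, the $n$th $1$--handle and $n$th $2$--handle form a trivially cancelling pair in every fiber, so this pair can be removed from the fiberwise handle decomposition without changing $Z$.

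The main obstacle is this $t$--parametrized compatibility: pair cancellation in any single fiber is straightforward (Cerf's \emph{l'unicit\'e}), but arranging it simultaneously for all $t$ while preserving a basepoint-preserving loop structure requires enough Cerf-theoretic flexibility (stabilization, handle slides, lightbulb isotopies) to kill the closure failure introduced by the ambient isotopy $\varphi_t$. Once this is done, iterating the induction drives $n$ down to $1$.
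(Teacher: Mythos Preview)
Your outline of the $\supset$ direction is fine, but the $\subset$ argument has a genuine gap at exactly the place you flag as the ``main obstacle'', and the paper's proof uses a completely different mechanism. The paper reverts to the Cerf graphic (a family $(f_t,V_t)$ on $[0,1]\times S^4$ with $n$ nested index $1$--$2$ ``eyes''), and then reduces $n$ to $1$ by the Hatcher--Wagoner trick: introduce a swallowtail on one eye, merge its birth and death with those of an adjacent eye, and then cancel the swallowtail using Chenciner's theorem, which applies precisely because the index satisfies $i \leq m-3$, i.e.\ $1 \leq 4-3$. That dimension inequality is the engine of the whole argument, and your proposal never invokes it.

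Concretely, your proposed fix for the closure failure does not work as stated. Stabilizing by an auxiliary $1$--$2$ pair takes you from $n$ pairs to $n+1$, which runs against the induction; you would then need two pairs to cancel uniformly in $t$, and that is exactly the problem you started with. The appeal to Gabai's $4$--dimensional lightbulb theorem is also misplaced: that theorem concerns $2$--spheres with geometrically dual $2$--spheres, whereas here the $2$--handle attaching circles are $S^1$'s in $\#^n(S^1\times S^3)$ whose duals (the belt spheres of the $1$--handles) are $S^3$'s, so the situation of Theorem~\ref{T:23to12} does not carry over. What you would actually need is control over $\pi_1$ of the embedding space of a single circle in $\#^n(S^1\times S^3)$, and Budney--Gabai show this is highly nontrivial already for $n=1$; one cannot expect to straighten $\alpha^n_t$ ``for free''. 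The swallowtail manoeuvre is the standard Cerf-theoretic device that sidesteps this entirely.
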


\begin{proof}
We begin again with a cobordism $Z$ from $S^1 \times S^4$ to $S^1 \times_\phi S^4$ built as a family of cobordisms $Y_t$, each $Y_t$ built by attaching $n$ fixed standard $1$--handles to $[0,1] \times S^4$ followed by $n$ ``moving'' $2$--handles governed by a loop of embeddings $\alpha_t: \amalg^n (S^1 \times B^3) \into \#^n (S^1 \times S^3)$. Cancelling the $(1,2)$ pairs at time $t=0 \sim 1$, we revert to the Cerf theoretic perspective to get a family $(f_t,V_t)$ of Morse functions with gradient-like vector fields on $[0,1] \times S^4$ interpolating from $f_0$, which is projection onto $[0,1]$, to $f_1$, which is the pullback of $f_0$ via some pseudoisotopy $\Phi: [0,1] \times S^4 \to [0,1] \times S^4$ from $\id_S^4$ to $\phi$. The graphic now looks like Figure~\ref{F:Nice12Cerf}, exactly as in Figure~\ref{F:Nice23Cerf} except that now the critical points are of index $1$ and $2$; there are still no handle slides.
 \begin{figure}
  \labellist
  \small\hair 2pt
  \pinlabel $1$ [tr] at 45 20 
  \pinlabel $1$ [tr] at 49 24 
  \pinlabel $1$ [tr] at 56 31 
  \pinlabel $2$ [br] at 45 67 
  \pinlabel $2$ [br] at 49 63 
  \pinlabel $2$ [br] at 56 55 
  \endlabellist
  \centering
  \includegraphics[width=10cm]{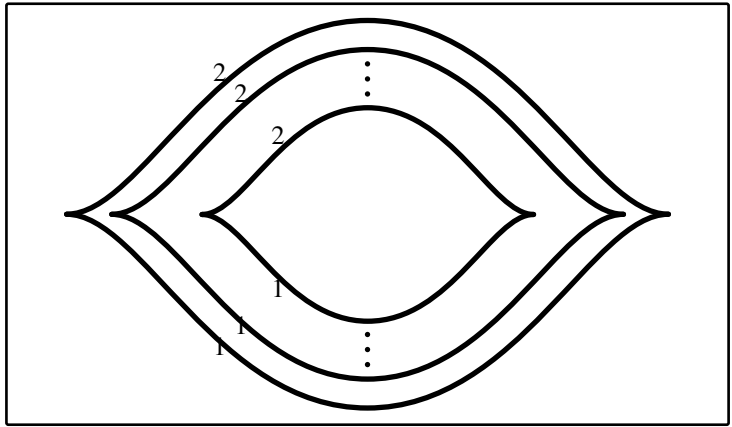}
  \caption{A nice Cerf graphic with only critical points of index $1$ and $2$.}
  \label{F:Nice12Cerf}
\end{figure}

Theorem~2.1.1 of Chenciner's thesis~\cite{Chenciner}, restated as Hatcher and Wagoner's Proposition~1.4 on p.177 of~\cite{HatcherWagoner}, asserts that, given a $1$--parameter family $f_t$ of Morse functions on $[0,1] \times X$ where $X$ is an $m$--manifold, if the Cerf graphic contains a swallowtail involving critical points of index $i$ and $i+1$ as in the left of Figure~\ref{F:Swallowtail}, with $i \leq m-3$, then the swallowtail can be cancelled to give the graphic on the right in Figure~\ref{F:Swallowtail}. This applies in our setting because $m=4$ and $i=1 = 4-3$. We use this to reduce the number of $(1,2)$ pairs using the main idea of Proposition~4 on p.217 of~\cite{HatcherWagoner}, as in the figure on the top of p.218 of~\cite{HatcherWagoner}. We essentially reproduce this figure here in Figure~\ref{F:2To1BySwallowtail} which shows how to reduce a nested pair of birth-deaths of $1$--$2$ handles to a single pair. (The other elementary moves are introducing a swallowtail, which can always be done, and merging a death with a birth, which can always be done if level sets are connected, which they are in our case.)
 \begin{figure}
  \labellist
  \small\hair 2pt
  \pinlabel $i$ [t] at 46 30
  \pinlabel $i$ [t] at 46 30
  \pinlabel $i+1$ [t] at 12 60
  \pinlabel $i+1$ [t] at 78 60
  \pinlabel $i+1$ [t] at 160 60
  \endlabellist
  \centering
  \includegraphics[width=10cm]{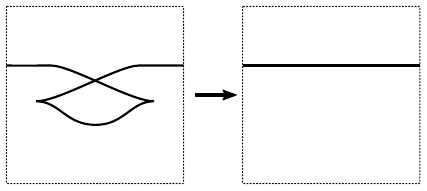}
  \caption{Eliminating a swallowtail.}
  \label{F:Swallowtail}
\end{figure}
 \begin{figure}
  \includegraphics[width=12cm]{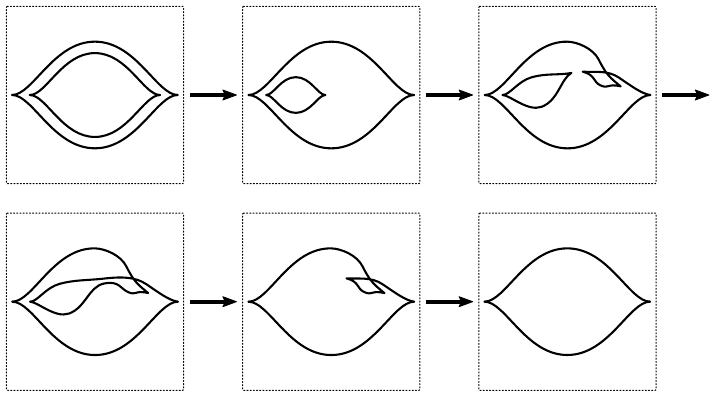}
  \caption{Using a swallowtail to turn a nested pair of ``eyes'' into a single eye.}
  \label{F:2To1BySwallowtail}
\end{figure}

Repeating this we can turn $n$ nested $1$--$2$ ``eyes'' into a single nested $1$--$2$ ``eye'', and then we can merge the birth again at $t=0 \sim 1$. Note that in fact we could have left this last (bottom-most) index $1$ critical point completely unchanged in this whole process, we even did not need to cancel it with its cancelling $2$--handle at the beginning. Thus we can easily arrange that this last $1$--handle is still stationary, i.e. its attaching map does not move with $t$. This shows that this cobordism can be built with a single fixed standard $1$--handle followed by a single moving $2$--handle whose attaching map is given by a loop of embeddings $S^1 \times B^3 \into S^1 \times S^3$. Therefore $[\phi] \in \mathcal{FH}_{1,1}(\pi_1(\mathcal{FS}_{1,1}(S^4)))$.
 
\end{proof}

\begin{remark}
 In case it is needed in another context, the general version of this theorem is that, if $k \leq m-3$ and $X$ is an $m$--manifold, then for any $n$, $\mathcal{FH}_{k,n}(\pi_1(\mathcal{FS}_{k,n}(X))) = \mathcal{FH}_1(\pi_1(\mathcal{FS}_{k,1}(X)))$.
\end{remark}

\section{Twists along Montesinos twins} \label{S:Montesinos}

As a consequence of the preceding two theorems we now know that any diffeomorphism of $S^4$ arising as the monodromy of the top of a cobordism constructed as above from a loop of $n$ $2$--spheres in $\#^n (S^2 \times S^2)$ which remain disjoint from a parallel copy $\amalg^n (S^2 \times \{p'\})$ of the basepoint embedding $\amalg^n (S^2 \times \{p\})$ is isotopic to a diffeomorphism arising from a loop of embeddings of a single circle in $S^1 \times S^3$. This is summarized as:
\begin{corollary}
 \[\mathcal{H}_{2,\infty}(\imath_*(\pi_1(\widehat{\mathcal{S}}_{2,\infty}(S^4)))) = \mathcal{FH}_{1,1}(\pi_1(\mathcal{FS}_{1,1}(S^4))) \]
\end{corollary}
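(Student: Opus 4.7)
The plan is to observe that this corollary is a direct consequence of the two preceding theorems, namely Theorem~\ref{T:23to12} and Theorem~\ref{T:Many12toOne12}, combined with the compatibility of everything with the inclusion-induced direct limit maps. There is no real obstacle to overcome here; the work has already been done in the previous two sections.

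First I would fix $n$ and chain the two theorems together: Theorem~\ref{T:23to12} says that $\mathcal{H}_n(\imath_*(\pi_1(\widehat{\mathcal{S}}_n))) = \mathcal{FH}_n(\pi_1(\mathcal{FS}_n(S^4,1)))$, and Theorem~\ref{T:Many12toOne12} says that the right-hand side equals $\mathcal{FH}_1(\pi_1(\mathcal{FS}_1(S^4,1)))$. Therefore
\[ \mathcal{H}_n(\imath_*(\pi_1(\widehat{\mathcal{S}}_n))) = \mathcal{FH}_1(\pi_1(\mathcal{FS}_1(S^4,1))) \]
for every $n \geq 1$, as subgroups of $\pi_0(\mathop{Diff^+}(S^4))$.

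Next I would pass to the limit. The homomorphism $\mathcal{H}_\infty$ is by construction the direct limit of the $\mathcal{H}_n$, and the inclusion-induced maps $\jmath_* : \pi_1(\widehat{\mathcal{S}}_n) \to \pi_1(\widehat{\mathcal{S}}_{n+1})$ commute with $\imath_*$ and with the $\mathcal{H}_n$, so
\[ \mathcal{H}_\infty(\imath_*(\pi_1(\widehat{\mathcal{S}}_\infty))) = \bigcup_{n \geq 1} \mathcal{H}_n(\imath_*(\pi_1(\widehat{\mathcal{S}}_n))). \]
Since each term in the union equals the fixed subgroup $\mathcal{FH}_1(\pi_1(\mathcal{FS}_1(S^4,1)))$ (which does not depend on $n$), the union is also this subgroup, yielding the claimed equality.

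The only mild subtlety worth flagging is making sure the identifications are consistent: both sides live naturally in $\pi_0(\mathop{Diff^+}(S^4))$, and the constructions of $\mathcal{H}_n$ and $\mathcal{FH}_1$ are built from handle-decomposed cobordisms of the same sort (bundles over $S^1$ with prescribed handle data), so the equalities of subgroups produced by Theorems~\ref{T:23to12} and~\ref{T:Many12toOne12} are genuine equalities of subgroups of the same ambient group, not merely isomorphisms. With this in hand, the corollary is immediate.
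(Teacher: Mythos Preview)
Your proposal is correct and follows essentially the same approach as the paper: the paper simply states that the corollary is a consequence of the two preceding theorems (Theorem~\ref{T:23to12} and Theorem~\ref{T:Many12toOne12}), and you have spelled out exactly that chaining together with a slightly more explicit treatment of the passage to the direct limit.
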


Our next goal, which will complete the proof of Theorem~\ref{T:TwinsGenerate}, is to show that 
\[\mathcal{FH}_{1,1}(\pi_1(\mathcal{FS}_{1,1}(S^4)))\] 
is generated by twists along Montesinos twins as advertised. Recall that a Montesinos twin is a pair $W=(R,S)$ of embedded, oriented $2$--spheres $R,S \subset S^4$ intersecting transversely at two points. We asserted in the introduction that the boundary of a regular neighborhood of $R \cup S$ is a $3$--torus $S^1_l \times S^1_R \times S^1_S$. We will now give an explanation of this fact tailored to the proof to follow.

Consider the $4$--manifold $X = S^1 \times B^2 \times S^1$ with boundary $M = S^1 \times S^1 \times S^1$. Let $\Gamma$ be the core $2$--torus $S^1 \times \{0\} \times S^1 \subset X$ and let $C$ be the circle $\{1\} \times \{0\} \times S^1 \subset \Gamma$, with the obvious framing coming from all the product structures involved. Now let $X_C$ be the result of surgery along $C$. Since $C \subset \Gamma$, this also surgers $\Gamma$ along $C$ and the resulting surface $\Gamma_C$ is a $2$--sphere, which we will call $R$. Since surgering along a circle $C$ replaces an $S^1 \times B^3$ with $S^2 \times B^2$, we also get a new $S^2$ coming from the surgery, which we will call $S$, and we note that $S$ which intersects $R$ at exactly two points, with opposite sign. Thus in fact $X_C$ is a regular neighborhood of two $2$--spheres $R \cup S$, each with trivial normal bundle, intersecting each other twice with opposite signs. In other words, $X_C$ is a regular neighborhood of a Montesinos twin $W = R \cup S$.

Furthermore, looking at the boundary $3$--manifold $M = S^1 \times S^1 \times S^1 = \partial S^1 \times B^2 \times S^1 = \partial X_C$, we see that the second factor $\{1\} \times S^1 \times \{1\}$ bounds a disk transversely intersecting $R$ once, and is therefore characterized by being a meridian to $R$. Likewise the third factor $\{1\} \times \{1\} \times S^1$ bounds a disk (after the surgery) which intersects $S$ transversely once, and is therefore characterized by being a meridian to $S$. The first factor is not uniquely characterized by this construction, but if the Montesinos twin is embedded in $S^4$ then it can be uniquely characterized by being a longitude, i.e. being homologically trivial in the complement of the twin.

\begin{proof}[Proof of Theorem~\ref{T:TwinsGenerate}]
Recall that $\mathcal{FS}_{1,1}(S^4))$ is the space of framed embeddings of $S^1$ in $S^1 \times S^3$ while $\mathcal{S}_{1,1}(S^4)) = \Emb(S^1,S^1\times S^3)$ is the space of unframed embeddings of $S^1$ in $S^1 \times S^3$. As noted at the end of Section~\ref{S:LoopsToDiffs}, we might worry that the homomorphism $\pi_1(\mathcal{FS}_{1,1}(S^4))) \to \pi_1(\mathcal{S}_{1,1}(S^4)))$ is not surjective, since there are two possible framings of a circle in a $4$--manifold. However, Budney and Gabai~\cite{BudneyGabai} give explicit representatives of generators for $\pi_1(\Emb(S^1,S^1\times S^3))$ all of which can be seen to lift to framed loops of embeddings, and thus the map is surjective so we do not need to worry about framings anymore. There is presumably a more direct way to see this, the point being that there is no loop of embeddings of $S^1$ in $S^1 \times S^3$ which switches the two framings of $S^1$.

For the remainder of this proof, we will use the less obscure notation $\Emb(S^1,S^1\times S^3)$ to refer to the space $\mathcal{S}_{1,1}(S^4)$, the latter more complicated notation only being helpful when placing things in the much more general context of Section~\ref{S:LoopsToDiffs}. Also note that in Section~\ref{S:LoopsToDiffs} we punctured the target space of our embeddings, but here we drop the puncture for simplicity. The point is that, at the level of $\pi_1$, the puncture is irrelevant since loops of circles are $2$--dimensional while homotopies of loops of circles are $3$--dimensional, so since the ambient space is $4$--dimensional everything can be assumed to miss a point.

In fact~\cite{BudneyGabai} shows that every class $a \in\pi_1(\Emb(S^1,S^1\times S^3))$ can be represented by a loop $\alpha_t : S^1 \into S^1 \times S^3$ of embeddings such that the associated map $\Gamma: S^1 \times S^1 \to S^1 \times S^3$ given by $\Gamma(t,s) = \alpha_t(s)$ is itself an embedding. Thus we have an embedded torus $\Gamma: S^1 \times S^1 \into S^1 \times S^3$ such that $\Gamma(\{0\} \times S^1)$ is the basepoint embedding $C = S^1 \times \{p\}$. Surgery along $C$ applied to the pair $(S^1 \times S^3, \Gamma)$ yields $(S^4,R)$ for some embedded $2$--sphere $R \subset S^4$, and the $2$--sphere $S$ dual to the surgery circle is an unknotted sphere $S \subset S^4$ such that $(R,S)$ forms a Montesinos twin. Furthermore, the boundary $\partial \nu(R \cup S)$ of a neighborhood of this twin in $S^4$ is the same as the boundary of a tubular neighborhood of $\Gamma(S^1 \times S^1)$ in $S^1 \times S^3$. When this $3$--torus is parametrized as $S^1_l \times S^1_R \times S^1_S$ as in the introduction, we see that the $S^1_l$ parameter corresponds to the $t$ parameter in $\Gamma(t,s) = \gamma_t(s)$, that the $S^1_R$ direction corresponds to the $s$--parameter, and that the $S^1_S$ direction corresponds to the boundary of the disk factor in the tubular neighborhood $\nu(\Gamma(S^1 \times S^1)) \cong D^2 \times S^1 \times S^1$. 
 
Because $\Gamma$ is embedded, it is relatively easy to see what $\mathcal{H}_1([\gamma_t])$ looks like. We need an ambient isotopy $\phi_t$ of $S^1 \times S^3$ with $\phi_0 = \id$, $\phi_t \circ \gamma_0 = \gamma_t$ and $\phi_1$ equal to the identity on a neighborhood of $C$. This is the ``circle pushing'' map we get by dragging the circle around the embedded torus and back to its starting position. This can happen entirely in a tubular neighborhood $D^2 \times S^1 \times S^1$ of $\Gamma$, by spinning in the $t$ direction more and more as we move towards the center of $D^2$, which we state explicitly as follows: Let $(r,\theta)$ be polar coordinates on $D^2$, and let $(t,s)$ be coordinates as before on $S^1 \times S^1$. Choose a smooth non-increasing function $T: [0,1] \to [0,2\pi]$ which is $1$ on $[0,1/4]$, $0$ on $[3/4,1]$, and let $\phi_t(r,\theta,t,s) = (r,\theta,t+T(r),s)$. From this it is clear that $\phi_1$ is the identity on $r \in [0,1/4]$ and $r \in [3/4,1]$, and on the intermediate $[1/4,3/4] \times S^1 \times S^1 \times S^1$ is equal to a Dehn twist on $[1/4,3/4] \times S^1$ crossed with the identity in the remaining $S^1 \times S^1$ direction. Back in $S^4$ this is exactly the twist $\tau_W$ along the twin $W = (R,S)$.

In fact~\cite{BudneyGabai} establishes an isomorphism
\[ W_1 \times W_2 : \pi_1(\Emb(S^1,S^1 \times S^3),C) \to \Z \oplus \Lambda_3^1 \]
where $\Lambda_3^1$ is a free abelian group on a countably infinite generating set. The $\Z$ factor in $\Z \oplus \Lambda_3^1$ is given by the loops of $S^1$--reparametrizing embeddings $\gamma_t(s) = \gamma_0(s+nt)$, and it is easy to see that $\mathcal{H}_1$ applied to such a loop of embeddings is isotopic to $\id_{S^4}$, i.e. this $\Z$ factor is in the kernel of $\mathcal{H}_1$. Modulo this $\Z$ factor, Figure~4 in~\cite{BudneyGabai} gives the first two tori $T(1)$ and $T(2)$ in an obvious family $T(i)$ of tori in $S^1 \times S^3$ which give the countably infinite generating set corresponding to $\Lambda_3^1$. We draw $T(3)$ in Figure~\ref{F:FromBudneyGabaiToMontesinos}. In this figure, the circle $C \subset S^1 \times S^3$ is represented as a red vertical line on the far right side of the torus. The torus $T(n)$ is just like this but wraps $n$ times around the $S^1$ direction. Surgering along $C$ yields our Montesinos twins $W(i) = (R(i),S)$.
\begin{figure}
  \labellist
  \small\hair 2pt
  \pinlabel $C$ [r] at 230 150
  \endlabellist
  \centering
  \includegraphics[width=6cm]{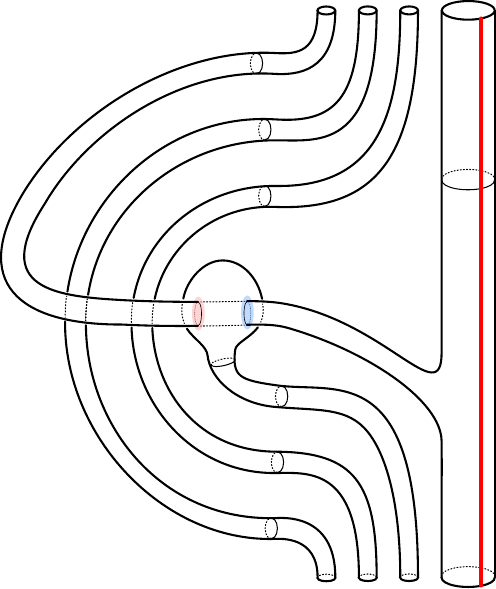}
  \caption{The embedded torus $T(3)$ in $S^1 \times S^3$, the obvious next member of the family of tori described in Figure~4 of~\cite{BudneyGabai}. The top is glued to the bottom, and horizontal slices are $S^3$'s, with the ``time'' coordinate indicated in red/blue shading, as in Figure~\ref{F:snake}. Here we have exaggerated certain features of this torus and deformed somewhat from the drawings in Figure~4 of~\cite{BudneyGabai} so that the connection with the Montesinos twin $W(3)=(R(3),S)$ in Figure~\ref{F:snake} is visually apparent. Surgering along the red circle $C$ collapses the vertical cylinder on the right into a sphere (the tail of the snake), with the dual sphere to $C$ becoming the tail-piercing sphere $S$.}
  \label{F:FromBudneyGabaiToMontesinos}
\end{figure}

\end{proof}

\bibliographystyle{plain}
%
%
\bibliography{S4Diffeos}

\begin{thebibliography}{10}

\bibitem{Asimov}
Daniel Asimov.
\newblock Round handles and non-singular {M}orse-{S}male flows.
\newblock {\em Ann. of Math. (2)}, 102(1):41--54, 1975.

\bibitem{Bott}
Raoul Bott.
\newblock The stable homotopy of the classical groups.
\newblock {\em Ann. of Math. (2)}, 70:313--337, 1959.

\bibitem{BudneyGabai}
Ryan {Budney} and David {Gabai}.
\newblock {Knotted 3-balls in $S^4$}, 2019.
\newblock arXiv:1912.09029.

\bibitem{Cerf}
Jean Cerf.
\newblock La stratification naturelle des espaces de fonctions
  diff\'{e}rentiables r\'{e}elles et le th\'{e}or\`eme de la pseudo-isotopie.
\newblock {\em Inst. Hautes \'{E}tudes Sci. Publ. Math.}, (39):5--173, 1970.

\bibitem{Chenciner}
Alain Chenciner.
\newblock {\em Sur la ge\'eom\'etrie des strates de petites codimensions de
  l'espace des fonctions diff\'erentiables r\'elles sur une vari\'et\'e}.
\newblock PhD thesis, 1971.

\bibitem{GabaiLightBulb}
David Gabai.
\newblock The 4-dimensional light bulb theorem.
\newblock {\em J. Amer. Math. Soc.}, 33(3):609--652, 2020.

\bibitem{GompfStipsicz}
Robert~E. Gompf and Andr\'as~I. Stipsicz.
\newblock {\em {$4$}-manifolds and {K}irby calculus}, volume~20 of {\em
  Graduate Studies in Mathematics}.
\newblock American Mathematical Society, Providence, RI, 1999.

\bibitem{HartmanB3sInB5}
Daniel Hartman.
\newblock Unknotting 3-balls in the 5-ball.
\newblock {\em Proc. Amer. Math. Soc.}, 152(3):1333--1336, 2024.

\bibitem{HatcherWagoner}
Allen Hatcher and John Wagoner.
\newblock {\em Pseudo-isotopies of compact manifolds}.
\newblock Soci\'{e}t\'{e} Math\'{e}matique de France, Paris, 1973.
\newblock With English and French prefaces, Ast\'{e}risque, No. 6.

\bibitem{KervaireMilnor}
Michel~A. Kervaire and John~W. Milnor.
\newblock Groups of homotopy spheres. {I}.
\newblock {\em Ann. of Math. (2)}, 77:504--537, 1963.

\bibitem{KrannichKupers}
Manuel Krannich and Alexander Kupers.
\newblock On {T}orelli groups and {D}ehn twists of smooth 4-manifolds, 2021.
\newblock arXiv:2105.08904.

\bibitem{Kreck}
M.~Kreck.
\newblock Isotopy classes of diffeomorphisms of {$(k-1)$}-connected
  almost-parallelizable {$2k$}-manifolds.
\newblock In {\em Algebraic topology, {A}arhus 1978 ({P}roc. {S}ympos., {U}niv.
  {A}arhus, {A}arhus, 1978)}, volume 763 of {\em Lecture Notes in Math.}, pages
  643--663. Springer, Berlin, 1979.

\bibitem{Levine}
J.~P. Levine.
\newblock Lectures on groups of homotopy spheres.
\newblock In {\em Algebraic and geometric topology ({N}ew {B}runswick,
  {N}.{J}., 1983)}, volume 1126 of {\em Lecture Notes in Math.}, pages 62--95.
  Springer, Berlin, 1985.

\bibitem{MontesinosI}
Jos\'{e}~M. Montesinos.
\newblock On twins in the four-sphere. {I}.
\newblock {\em Quart. J. Math. Oxford Ser. (2)}, 34(134):171--199, 1983.

\bibitem{Palais}
Richard~S. Palais.
\newblock Local triviality of the restriction map for embeddings.
\newblock {\em Comment. Math. Helv.}, 34:305--312, 1960.

\bibitem{Quinn}
Frank Quinn.
\newblock Isotopy of {$4$}-manifolds.
\newblock {\em J. Differential Geom.}, 24(3):343--372, 1986.

\bibitem{Watanabe}
Tadayuki {Watanabe}.
\newblock {Some exotic nontrivial elements of the rational homotopy groups of
  $\mathrm{Diff}(S^4)$}.
\newblock arXiv:1812.02448.

\end{thebibliography}

\end{document}